\theoremstyle{plain}
\newtheorem{thm}{Theorem}[]
\newtheorem{lem}[thm]{Lemma}
\newtheorem{prop}[thm]{Proposition}
\theoremstyle{definition}
\renewcommand{\P}{\mathbb{P}}
\newcommand{\E}{\mathbb{E}}
\newcommand{\R}{\mathbb{R}}
\newcommand{\Z}{\mathbb{Z}}
\newcommand{\N}{\mathbb{N}}
\newcommand{\F}{\mathcal{F}}
\newcommand{\G}{\mathcal{G}}
\newcommand{\ind}{\mathbbm{1}}
\newcommand{\eps}{\varepsilon}
\newcommand{\bp}{\begin{proof}}
\newcommand{\ep}{\end{proof}}
\DeclareMathOperator{\cov}{cov}
\DeclareMathOperator{\cosech}{cosech}
\newcommand{\tcov}{T_{\cov}}
\def\bal#1\eal{\begin{align*}#1\end{align*}}
\title{Cover time for branching random walks on regular trees}
\author{Matthew I.~Roberts\thanks{University of Bath, Department of Mathematical Sciences, Bath BA2 7AY, UK. \texttt{mattiroberts@gmail.com}}}
\begin{document}
\maketitle

\begin{abstract}
Let $T$ be the regular tree in which every vertex has exactly $d\ge 3$ neighbours. Run a branching random walk on $T$, in which at each time step every particle gives birth to a random number of children with mean $d$ and finite variance, and each of these children moves independently to a uniformly chosen neighbour of its parent. We show that, starting with one particle at some vertex $0$ and conditionally on survival of the process, the time it takes for every vertex within distance $r$ of $0$ to be hit by a particle of the branching random walk is almost surely $r + \frac{2}{\log(3/2)}\log\log r + o(\log\log r)$.
\end{abstract}

\section{Introduction and main result}\label{introsec}

Consider a {\it branching random walk} (BRW) on a graph $G$, beginning with one particle at some vertex, where each particle branches into a random number of offspring (independently and according to some fixed distribution), each of which jumps to a uniformly chosen neighbour. The behaviour of BRW when $G=\Z$ is a well studied subject starting with Hammersley \cite{hammersley:postulates_subadditive}, Kingman \cite{kingman:first_birth_problem} and several papers by Biggins; see for example \cite{biggins:chernoff_in_brw, biggins:asymptotic_shape_brw, biggins:first_last_birth}. We also highlight an early paper of Bramson \cite{bramson:minimal_displacement_BRW}, which contrasts with more recent results of Aidekon \cite{aidekon:convergence_law_min_brw} and Bramson, Ding and Zeitouni \cite{bramson_ding_zeitouni:convergence_nonlattice}.

In this article we consider instead the case when the underlying graph $G$ is the regular tree in which every vertex has exactly $d\ge 3$ neighbours (of course, $G=\Z$ can be viewed as the case $d=2$). We suppose that the expected number of offspring of each particle in the branching random walk is also $d$; this is \emph{critical} in the geometric sense that the expected number of offspring moving to each neighbouring site has mean $1$. We start with one particle at the root (an arbitrary vertex) of the tree, and ask for the cover time of a ball of radius $r$. That is, how long does it take before every site within distance $r$ of the root has been visited by a particle of the BRW?

To state our result precisely, let $T$ be the infinite $d$-ary tree in which every vertex has $d$ neighbours, and fix a vertex which we label $0$ and refer to as the root or origin. Suppose that $\mu$ is a probability measure on $\Z_+$ such that $\sum_{j\ge 0}j\mu(j)=d$ and $\sum_{j\ge 0}j^2\mu(j)<\infty$. Consider a branching random walk on $T$, starting with one particle at the root, in which at every time step:
\begin{enumerate}[(a)]
\item each particle at any site $x\in T$ dies and gives birth to a random number of children independently and with distribution $\mu$;
\item each of these offspring independently jumps to a neighbour of $x$, uniformly at random.
\end{enumerate}
For each vertex $x\in T$, let $H(x)$ be the first time at which there is a particle at $x$. For $r\ge 0$, let $B(r) = \{x\in T : d(0,x)\le r\}$ and $\partial B(r) = \{x\in T : d(0,x)=\lfloor r\rfloor\}$. We are interested in the cover time of $B(r)$, defined to be
\[\tcov(r) = \max_{x\in B(r)} H(x),\]
when $r$ is large. Of course, if $\mu(0)>0$, there is a positive probability that the process will die out in finite time; however, since $\mu$ has mean larger than one and finite variance, there is strictly positive probability that the process does not die out in finite time \cite{kesten_stigum:lim_multi-dim_gw}. In this case we say that the process \emph{survives}.

\begin{thm}\label{mainthm}
For any $d\ge 3$, given that the process survives,
\[\lim_{r\to\infty} \frac{\tcov(r)-r}{\log\log r} = \frac{2}{\log(3/2)} \hspace{4mm} \hbox{almost surely.}\]
\end{thm}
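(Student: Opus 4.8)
The plan is to prove matching upper and lower bounds on $\tcov(r)$, each holding almost surely on survival, via first- and second-moment estimates on the number of particles reaching far-out vertices, combined with a Borel--Cantelli argument over a geometric sequence of radii. The key heuristic is this: for a fixed vertex $x$ with $d(0,x)=n$, the expected number of particles that follow the (essentially unique) geodesic from $0$ to $x$ and arrive at time exactly $n$ is $\Theta(1)$, because at each step the expected number of offspring moving toward $x$ is $1$. To arrive at $x$ by time $n+s$, a particle can ``waste'' $s$ extra steps; counting walks of length $n+s$ from $0$ to $x$ on the tree and weighting by branching, one finds the expected number of arrivals by time $n+s$ grows like $\lambda^s$ for some $\lambda>1$ (here the relevant rate, coming from the spectral radius of the walk versus the branching, produces the constant $\log(3/2)$). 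Meanwhile $|\partial B(r)|$ grows like $(d-1)^r$, so to cover all of $\partial B(r)$ we need the per-vertex success probability to beat $(d-1)^{-r}$; balancing $\lambda^s$-type growth against this exponential number of vertices forces the extra time $s$ to be of order $\log(r)/\log\lambda$ — and a more careful accounting of the fluctuations, rather than the mean, replaces $\log r$ by $2\log\log r$ with the stated constant $2/\log(3/2)$.

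For the \emph{upper bound}, I would fix $\eps>0$, set $s = s(r) = \frac{2+\eps}{\log(3/2)}\log\log r$, and show that $\P(\tcov(r) > r+s)\to 0$ fast enough along $r_k$ to apply Borel--Cantelli. The first step is a many-to-one / spine computation: for each $x\in\partial B(n)$, estimate from below the probability $p_n(s)$ that some particle hits $x$ by time $n+s$, which by a second-moment argument on the count $Z_x$ of such particles should satisfy $p_n(s)\ge c\,\E[Z_x]^2/\E[Z_x^2]$; the finite-variance hypothesis on $\mu$ is exactly what keeps $\E[Z_x^2]$ under control. One then needs to handle the dependence between different $x$: the cleanest route is to condition on the BRW up to some intermediate level $\rho r$ (with $\rho<1$ close to $1$), note that on survival there are with high probability at least $e^{c\rho r}$ particles spread across $\partial B(\rho r)$, and run independent ``from scratch'' covering attempts from a positive density of these, so that the failure probability for a given far vertex is raised to a large power. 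A union bound over the $(d-1)^r$ vertices of $\partial B(r)$ then closes the estimate provided $\lambda^{s} \gg (d-1)^r/e^{c r}$ with the right constant; choosing the intermediate scales carefully is what pins down the factor $2$.

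For the \emph{lower bound}, I would show that with probability bounded away from $0$ (hence, by a $0$--$1$ argument along the tree of particles, almost surely on survival) there exists $x\in\partial B(r)$ with $H(x) > r + s'$ for $s' = \frac{2-\eps}{\log(3/2)}\log\log r$. Here the main tool is a truncated second moment on $N(r,s') := \#\{x\in\partial B(r): H(x)>r+s'\}$: one shows $\E[N(r,s')]\to\infty$ and $\E[N(r,s')^2] = O(\E[N(r,s')]^2)$, so that $\P(N(r,s')>0)$ stays bounded below. Computing $\E[N(r,s')]=\sum_x \P(H(x)>r+s')= |\partial B(r)|\,(1-p_n(s'))$ requires a matching \emph{upper} bound on the hitting probability $p_n(s')$, i.e.\ a sharp first-moment estimate $\E[Z_x]\le \lambda^{s'}$ together with the fact that hitting needs $Z_x\ge 1$; the second-moment term requires estimating, for two vertices $x,y\in\partial B(r)$, the probability that neither is hit by time $r+s'$, which decouples once the walks separate at their common ancestor, leaving an interaction factor that is summable. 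Converting ``with positive probability'' into ``almost surely on survival'' uses the standard trick of looking at the descendants of each of the (infinitely many, on survival) particles alive at a large time and noting these give asymptotically independent trials.

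\medskip

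\noindent\textbf{Main obstacle.}
The hardest part, and where I expect the real work to lie, is the sharp two-sided control of the single-vertex hitting probability $p_n(s)$ — equivalently, the precise exponential rate $\lambda$ and the polynomial corrections — together with managing the correlations between vertices at fluctuation scale rather than at the level of means. A naive first moment gives only that the expected number of arrivals at $x$ by time $n+s$ is roughly $\lambda^s$; but $\tcov(r)$ is governed by the \emph{largest} $H(x)$ over exponentially many $x$, so one needs the probability $p_n(s)$ to within subexponential-in-$s$ factors, which forces a genuine local limit / large-deviations analysis of the number of length-$(n+s)$ paths from $0$ to $x$ weighted by the critical branching (this is where $3/2$, as opposed to some other constant, emerges — presumably from optimising the trade-off between the $(d-1)$-ary path count and the walk's return structure, with the optimum independent of $d$). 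Getting the constant $2$ in front of $\log\log r$, as opposed to $1$ or $4$, will hinge on correctly matching the second-moment lower bound for the lower-bound direction with the conditioned-union-bound for the upper-bound direction, so that both the ``$\log$ of the number of vertices'' and the ``$\log$ of that'' enter with the same coefficient.
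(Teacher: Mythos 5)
Your proposal does not follow the paper's route and, more importantly, the route you sketch would fail at several specific points. The most serious problem is the heuristic framework itself: you posit a single exponential rate $\lambda$, so that the probability of hitting $x\in\partial B(n)$ by time $n+s$ grows like $\lambda^s$, and then propose to balance $\lambda^s$ against $(d-1)^r$ vertices. That balance yields $s\asymp\log r$, not $\log\log r$, and you acknowledge this but wave it away by saying "a more careful accounting of the fluctuations replaces $\log r$ by $2\log\log r$." That is not a small correction; it signals that the mechanism is genuinely different. In the actual argument, the quantity that matters is \emph{doubly} exponential in the number $k$ of allowed wrong-way steps: starting with one particle, the particles that have taken exactly $k$ steps away from a target $y$ form a chain of nested critical Galton--Watson processes, each seeded by the total progeny of the previous one, so their count grows roughly like $A^{2^{k-1}}$ (typically) or $A^{(3/2+\eps)^{k-1}}$ (for atypically slow vertices). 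That double exponential in $k$ is exactly why the extra time is $2k\asymp\log\log r$ rather than $\log r$. A scheme based on an exponential rate $\lambda$ cannot produce this, and the constant $\log(3/2)$ has nothing to do with a spectral radius: it comes from the fact that, by an entropy--cost balance, the worst vertex $y$ receives only $A^{1/2+\eps}$ generations of survival from $A$ independent critical GW trees (probability $\exp(-cA^{1/2-\eps})$, versus $(d-1)^{A^{1/2+\eps}}$ candidate vertices), so the total progeny at the next level is only $A\cdot A^{1/2+\eps}=A^{3/2+\eps}$, rather than the typical $A^2$. This is also why the answer is the same for every $d\ge3$ but changes to $2/\log 2$ at $d=2$: for $d\ge3$ the $(d-1)^{A^{1/2+\eps}}$ term always dominates the $\exp(cA^{1/2-\eps})$ term, regardless of $d$.

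Second, your lower-bound plan via a plain second-moment bound on $N(r,s')=\#\{x:H(x)>r+s'\}$ is explicitly the approach the paper says does not work: the hitting times $H(x)$, $H(y)$ for nearby $x,y$ are so strongly correlated that $\E[N^2]$ is not $O(\E[N]^2)$ at the relevant scale. The paper avoids this by a multi-scale construction using the doubly-exponential radii $R_k\approx\sum_j M^{(1/2+\delta)(3/2+\delta)^j}$: it freezes particles after $k$ steps away from a candidate slow vertex, shows that with probability close to $1$ a slow vertex at scale $k-1$ has a slow descendant at scale $k$ (Proposition \ref{nopartsprop}), and gains independence across scales from the tree structure and the freezing $\sigma$-algebras $\F^k_x$ rather than from a second moment. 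Your proposal has no analogue of the freezing construction, no nested-GW bookkeeping (Kolmogorov's $n\P(Z_n>0)\to 2/\sigma^2$ and Pakes' total-progeny theorem are the two core inputs), and no mechanism that would make the exponent $3/2$ appear. As written, the plan would not close, and identifying "sharp two-sided control of $p_n(s)$" as the main obstacle is accurate diagnostically, but the path you propose toward it (local limit theorem for weighted path counts) does not engage with the actual critical-GW structure that drives the result.
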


This result is initially surprising for two reasons. The first is that the cover time is so close (within a constant times $\log\log r$) to its trivial lower bound of $r$. However, upon reading Bramson's article \cite{bramson:minimal_displacement_BRW}, one can see the reason for the $\log\log r$ term, and might guess convergence of the quantity in our theorem to $2/\log 2$. Indeed, in the case $d=2$, this is the correct answer. The appearance of $2/\log(3/2)$ instead comes from the fact that there are exponentially many vertices in $\partial B(r)$, some of which are hit unusually late; although it is interesting then that the answer does not depend on the value of $d\ge 3$. We give a short heuristic in Section \ref{bgsec}.

The article is set out as follows. In Section \ref{bgsec} we give some background to Theorem \ref{mainthm} as well as a heuristic walkthrough of the proof; we also state some related open problems. In Section \ref{freezing_sec} we introduce a variant of our BRW in which we freeze particles that do not move in a particular direction. We then prove the lower bound for Theorem \ref{mainthm} in Section \ref{lbsec}, and the upper bound in Section \ref{ubsec}.

\section{Background, heuristic and open questions}\label{bgsec}

We write $f(n)\asymp g(n)$ to mean that there exist constants $0<c\le C<\infty$ such that $c\le f(n)/g(n) \le C$ for all large $n$. We write $\P_{n,x}$ for the probability measure under which we start with $n$ particles at the vertex $x$. More generally, for a collection $\Gamma = (x_1,\ldots,x_n)$ of vertices, we write $\P_\Gamma$ for the probability measure under which we start with a particle at each of the vertices $x_1,\ldots,x_n$. For example $\P_{(x,x,x)} = \P_{3,x}$, and $\P = \P_{(0)} = \P_{1,0}$.

\subsection{Background}

The problem of how fast a branching random walk spreads first appeared in the mid-1970s, with papers by Hammersley \cite{hammersley:postulates_subadditive}, Kingman \cite{kingman:first_birth_problem} and Biggins \cite{biggins:chernoff_in_brw, biggins:asymptotic_shape_brw, biggins:first_last_birth} giving---amongst other results---the first-order behaviour of the particle at maximal (or minimal) distance from the origin after $n$ steps. In 1978, Bramson \cite{bramson:minimal_displacement_BRW} described a branching random walk on $\Z_+$, beginning with one particle at $0$, in which at each time step each particle branched into an average of $m>1$ new particles, each of which stayed at its previous location with probability $1/m$ and moved one step to the right with probability $1-1/m$. Letting $M_n$ be the position of the minimal particle after $n$ steps, he showed that
\begin{equation}\label{bramresult}
M_n - \Big\lceil \frac{\log\log n - \log (V+o(1))}{\log 2}\Big\rceil \to 0
\end{equation}
almost surely, where $V$ is some non-trivial random variable. One of the purposes of looking at this model was that it showed significantly different behaviour from Bramson's concurrent work on branching Brownian motion (BBM) \cite{bramson:maximal_displacement_BBM}, demonstrating that giving a result on the detailed behaviour of $M_n$ was much harder in general for BRW than BBM. In fact, results in the spirit of \cite{bramson:maximal_displacement_BBM} were not given for BRW in $\R$ until relatively recently, by Aidekon \cite{aidekon:convergence_law_min_brw} and then Bramson, Ding and Zeitouni \cite{bramson_ding_zeitouni:convergence_nonlattice}. Bramson's result \eqref{bramresult} is very closely related to the cover time problem in the case $d=2$, and we will use elements of Bramson's proof in this article.

For branching random walks on other graphs, particularly trees, much of the existing literature is concerned with recurrence and transience and related questions: see for example \cite{gantert_muller:critical_bmc_transient, liggett:BRW_contact_trees, madras_schinazi:BRW_trees, pemantle_stacey:BRW_contact_trees}. The ``maximal particle'' question mentioned above for BRW on $\R$ has no direct analogue on trees. One could ask for the maximal distance from the origin over all particles after $n$ steps; it is easy to see that with our choice of parameters ($d$-ary tree and offspring distribution mean $d$), conditionally on survival, this is $n-O(1)$ almost surely. Studying the cover time, or equivalently the largest ball that has been covered in $n$ steps, is an equally natural alternative, and with our choice of parameters it is a much more delicate and interesting question.

For our proof we will need the following well-known result on critical Galton-Watson processes, which is originally due to Kolmogorov \cite{kolmogorov:solution_biological_problem} under a third moment assumption. See for example \cite[Theorem 12.7]{lyons_peres:probability_on_trees} for a modern proof.

\begin{lem}[Kolmogorov]\label{kollem}
Suppose that $(Z_n, n\ge 0)$ is a Galton-Watson process started from $Z_0=1$ satisfying $\E[Z_1]=1$ and $\sigma^2 := \E[Z_1^2]-1 <\infty$. Then
\[n\P(Z_n>0) \to 2/\sigma^2.\]
\end{lem}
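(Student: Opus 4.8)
The plan is to pass to probability generating functions and reduce the statement to a deterministic recursion analysed via a telescoping sum. Write $f(s)=\E[s^{Z_1}]$ for the offspring generating function and $f_n$ for its $n$-fold iterate, so that $\E[s^{Z_n}]=f_n(s)$ and in particular $\P(Z_n>0)=1-f_n(0)$. Since the process is critical and nondegenerate (the case $\mu(1)=1$ being excluded), extinction occurs almost surely, so $q_n:=f_n(0)=\P(Z_n=0)$ increases to $1$ while $q_n<1$ for every $n$. Everything then reduces to showing that $n(1-q_n)\to 2/\sigma^2$.

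The key local estimate is that
\[
\frac{1}{1-f(s)}-\frac{1}{1-s}\;\xrightarrow[\,s\to 1^-\,]{}\;\frac{\sigma^2}{2}.
\]
To prove this I would set $u=1-s$ and use $\sum_j j\mu(j)=1$ to write $u-\bigl(1-f(1-u)\bigr)=\sum_{j\ge 0}\mu(j)\bigl[(1-u)^j-1+ju\bigr]$. Each summand is nonnegative, and Taylor's theorem with Lagrange remainder gives $0\le (1-u)^j-1+ju=\binom{j}{2}u^2(1-\xi_j)^{j-2}\le \binom{j}{2}u^2$ for some $\xi_j\in(0,u)$; since $\sum_j\binom{j}{2}\mu(j)=\tfrac12(\E[Z_1^2]-\E[Z_1])=\sigma^2/2<\infty$, dominated convergence yields $\bigl(u-(1-f(1-u))\bigr)/u^2\to\sigma^2/2$. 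Combined with $1-f(1-u)\sim u$, this gives $\frac{1}{1-f(1-u)}=\frac1u\cdot\frac{1}{1-(\sigma^2/2)u+o(u)}=\frac1u+\frac{\sigma^2}{2}+o(1)$, which is the claimed limit.

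Finally I would telescope. Using $q_{k+1}=f(q_k)$ and $q_0=0$,
\[
\frac{1}{1-q_n}=\frac{1}{1-q_0}+\sum_{k=0}^{n-1}\left(\frac{1}{1-f(q_k)}-\frac{1}{1-q_k}\right).
\]
As $k\to\infty$ we have $q_k\to 1$, so by the local estimate the $k$-th summand converges to $\sigma^2/2$; the Cesàro theorem then gives $\frac{1}{n(1-q_n)}\to\sigma^2/2$, that is, $n\P(Z_n>0)=n(1-q_n)\to 2/\sigma^2$. The only delicate point is the local estimate, and specifically extracting the $o(u^2)$ error in the expansion of $1-f(1-u)$ from a finite second moment rather than a third moment; the dominated convergence argument above is precisely what replaces the naive Taylor expansion valid under a third-moment hypothesis, and the remaining steps are routine.
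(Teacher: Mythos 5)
Your proof is correct. The paper does not prove this lemma itself---it simply cites Lyons and Peres, Theorem 12.7, for a modern proof---but your argument, telescoping $\frac{1}{1-q_n}-\frac{1}{1-q_0}=\sum_{k=0}^{n-1}\bigl(\frac{1}{1-f(q_k)}-\frac{1}{1-q_k}\bigr)$ together with the Kesten--Ney--Spitzer local estimate $\frac{1}{1-f(s)}-\frac{1}{1-s}\to\sigma^2/2$ obtained from a finite second moment via dominated convergence on the Lagrange remainders, is precisely the proof that reference gives.
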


Key to our argument will be a result on the total progeny of a Galton-Watson process up to generation $n$ due to Pakes \cite{pakes:total_progeny_GW}.

\begin{lem}[Pakes]\label{pakeslem}
Suppose that $(Z_n, n\ge 0)$ is a Galton-Watson process started from $Z_0=1$ satisfying $\E[Z_1]=1$ and $\sigma^2 := \E[Z_1^2]-1 <\infty$. Let $S_n = \sum_{i=0}^n Z_i$. Then for any $\gamma\in(0,\infty)$,
\[\P(S_n\ge \gamma n^2 \,|\, Z_n>0) \to 1-F(\gamma)\]
where $F$ satisfies
\[\int_0^\infty e^{-\theta v} dF(v) = \sqrt{2\sigma^2\theta} \cosech\big(\sqrt{2\sigma^2\theta}\big) \,\,\,\, \text{for all } \theta\in[0,\infty).\]
\end{lem}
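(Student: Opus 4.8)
The plan is to pass to joint generating functions and reduce everything to a scalar Riccati equation. Write $f(s):=\E[s^{Z_1}]$ for the offspring generating function, so that $f(1)=f'(1)=1$ and $f''(1)=\E[Z_1(Z_1-1)]=\sigma^2$, and set $G_k(s,u):=\E[s^{Z_k}u^{S_k}]$. Conditioning on the first generation gives the recursion $G_k(s,u)=u\,f\big(G_{k-1}(s,u)\big)$ with $G_0(s,u)=su$. Since $s^{Z_n}\downarrow\ind\{Z_n=0\}$ as $s\downarrow 0$, for every $\lambda\ge 0$ we have $\E\big[e^{-\lambda S_n}\ind\{Z_n>0\}\big]=G_n(1,e^{-\lambda})-G_n(0,e^{-\lambda})$. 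By Lemma \ref{kollem}, $n\P(Z_n>0)\to 2/\sigma^2$, so it is enough to prove that for each fixed $\theta\in(0,\infty)$,
\[\E\big[e^{-\theta S_n/n^2}\,\big|\,Z_n>0\big]\;\longrightarrow\;\sqrt{2\sigma^2\theta}\,\cosech\big(\sqrt{2\sigma^2\theta}\big).\]
The lemma then follows from the continuity theorem for Laplace transforms: the right-hand side tends to $1$ as $\theta\downarrow 0$, so it is the Laplace transform of a probability distribution $F$ on $[0,\infty)$, and the resulting convergence in law of $S_n/n^2$ given $Z_n>0$ yields $\P(S_n\ge\gamma n^2\mid Z_n>0)\to 1-F(\gamma)$ at every continuity point of $F$.

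It remains to analyse the recursion for $G_k$ with $u=u_n:=e^{-\theta/n^2}$. Fix $s\in\{0,1\}$, write $\beta_k:=1-G_k(s,u_n)$ and $h_k:=n\beta_k$. Starting from $\beta_k=(1-u_n)+u_n\big(1-f(1-\beta_{k-1})\big)$ and using $1-u_n=\theta/n^2+O(n^{-4})$ together with the Taylor expansion $1-f(1-\beta)=\beta-\tfrac{\sigma^2}{2}\beta^2+o(\beta^2)$ as $\beta\downarrow 0$ (this is the one place the finite-variance hypothesis enters), one obtains
\[h_k-h_{k-1}\;=\;\frac1n\Big(\theta-\tfrac{\sigma^2}{2}h_{k-1}^2\Big)\;+\;(\text{lower order}),\]
so that $t\mapsto h_{\lfloor nt\rfloor}$ should converge, locally uniformly on $(0,1]$, to the solution $\psi_s$ of the Riccati equation $\psi'=\theta-\tfrac{\sigma^2}{2}\psi^2$ picked out by its initial value: $\psi_1(0)=0$, since $h_0=n(1-u_n)\to 0$ when $s=1$, and $\psi_0(0^+)=+\infty$, since $h_0=n$ when $s=0$.

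With $a:=\sqrt{2\theta}/\sigma$, so that $\sigma^2 a=\sqrt{2\sigma^2\theta}$, the relevant solutions are $\psi_1(t)=a\tanh\!\big(\tfrac12\sigma^2 a\,t\big)$ and $\psi_0(t)=a\coth\!\big(\tfrac12\sigma^2 a\,t\big)$. Hence, using $\coth x-\tanh x=2/\sinh(2x)$,
\[n\big(G_n(1,u_n)-G_n(0,u_n)\big)\;=\;h_n^{(0)}(1)-h_n^{(1)}(1)\;\longrightarrow\;a\big(\coth-\tanh\big)\!\big(\tfrac12\sigma^2 a\big)\;=\;\frac{2a}{\sinh(\sigma^2 a)}\;=\;\frac{2}{\sigma^2}\sqrt{2\sigma^2\theta}\,\cosech\big(\sqrt{2\sigma^2\theta}\big).\]
Dividing by $n\P(Z_n>0)\to 2/\sigma^2$ gives the displayed limit, and with it the lemma.

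The main obstacle is making the discrete-to-continuous passage of the second paragraph rigorous and uniform, especially for the $s=0$ branch, whose initial datum is $+\infty$. One deals with this by a crude a priori estimate: from $h_k-h_{k-1}\le\tfrac1n\big(\theta-\tfrac{\sigma^2}{2}h_{k-1}^2+o(h_{k-1}^2)\big)$ the sequence $h_k$ falls from any large level to a fixed level within time $o(n)$, after which the Taylor remainder is genuinely negligible; one then runs a Gr\"onwall/comparison argument on $[\eps,1]$ and lets $\eps\downarrow 0$, checking that $\psi_0(\eps)\to\infty$ absorbs the transient. A cleaner but heavier alternative is to invoke the functional limit theorem that, conditionally on $Z_n>0$, the rescaled process $(n^{-1}Z_{\lfloor nt\rfloor})_{t\in[0,1]}$ converges in law to a Feller diffusion conditioned to survive until time $1$, to identify the limit of $S_n/n^2$ with $\int_0^1 X_t\,dt$, and to compute $\E\big[\exp(-\theta\int_0^1 X_t\,dt)\big]$ via a Feynman--Kac argument leading to the very same Riccati equation.
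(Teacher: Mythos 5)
The paper does not prove this lemma: it is cited verbatim to Pakes (1971), so there is no in-paper proof to compare against. Your argument is the classical one and, in substance, it is the route Pakes himself and subsequent authors take.

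Your calculation checks out in every detail. The joint generating function recursion $G_k(s,u)=u\,f(G_{k-1}(s,u))$, $G_0(s,u)=su$, is correct; $G_n(1,u)-G_n(0,u)=\E[u^{S_n}\ind\{Z_n>0\}]$ is correct (with $G_n(0,u)$ defined as the limit $s\downarrow 0$, justified by dominated convergence); the Taylor expansion $1-f(1-\beta)=\beta-\tfrac{\sigma^2}{2}\beta^2+o(\beta^2)$ is where $f''(1)=\E[Z_1(Z_1-1)]=\sigma^2$ enters; the resulting Riccati equation $\psi'=\theta-\tfrac{\sigma^2}{2}\psi^2$ has the solutions $\psi_1=a\tanh(\tfrac12\sigma^2at)$ and $\psi_0=a\coth(\tfrac12\sigma^2at)$ with $a=\sqrt{2\theta}/\sigma$; and the identity $\coth x-\tanh x=2/\sinh(2x)$ combined with $\sigma^2a=\sqrt{2\sigma^2\theta}$ and $a=\sqrt{2\sigma^2\theta}/\sigma^2$ gives the numerator $\tfrac{2}{\sigma^2}\sqrt{2\sigma^2\theta}\,\cosech(\sqrt{2\sigma^2\theta})$. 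Dividing by $n\P(Z_n>0)\to 2/\sigma^2$ (Lemma \ref{kollem}, i.e.\ Kolmogorov) yields the stated Laplace transform, and the continuity theorem converts this into weak convergence of $S_n/n^2$ conditioned on $\{Z_n>0\}$. Since the limiting Laplace transform is smooth and strictly completely monotone, $F$ is continuous, so the convergence $\P(S_n\ge\gamma n^2\mid Z_n>0)\to 1-F(\gamma)$ holds for every $\gamma$, not just continuity points; it is worth saying so explicitly since the lemma statement has no caveat.

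The one genuinely delicate point, which you flag yourself, is the discrete-to-continuous passage: one must control the Taylor remainder uniformly in $k$ (which requires a priori bounds showing $\beta_k\to 0$ uniformly once $k\ge\eps n$), and handle the $s=0$ branch whose initial value $h_0=n$ diverges. Your remedy (an a priori one-sided bound forcing $h_k$ to drop below a fixed level in $o(n)$ steps, then a Gr\"onwall comparison on $[\eps,1]$ with $\eps\downarrow 0$ absorbed by $\psi_0(\eps)\to\infty$) is the standard fix and is sound. The alternative you mention, via the Feller-diffusion functional limit for $(n^{-1}Z_{\lfloor nt\rfloor})$ conditioned on $\{Z_n>0\}$ together with a Feynman--Kac computation for $\E[\exp(-\theta\int_0^1 X_t\,dt)]$, is heavier machinery but gives the same Riccati ODE and would equally do. In short: your proof is correct in outline and the analytic details you leave as a sketch are the ones any proof of this lemma must face.
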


We will not need the precise form of $F$, only that $F(\gamma)$ is strictly smaller than $1$ for each finite $\gamma$. As a result, combining Lemmas \ref{kollem} and \ref{pakeslem} (using the same notation), and noting that 
\[\P(S_n\ge \gamma n^2) \ge \P(\{S_n\ge \gamma n^2\}\cap\{Z_n>0\}) = \P(S_n\ge \gamma n^2 \,|\, Z_n>0)\P(Z_n>0),\]
we obtain that for each $\gamma\in[0,\infty)$ there exists a constant $q(\gamma)>0$ depending only on $\gamma$ and $\sigma^2$ such that
\begin{equation}\label{totalprogeq}
\liminf_{n\to\infty} n\P(S_n\ge \gamma n^2) \ge q(\gamma).
\end{equation}
We will also use the following simple and well-known Chernoff bound. Suppose that $X$ is a finite sum of independent Bernoulli random variables. Then
\begin{equation}\label{chernoff}
P\left(X\leq \frac{E[X]}{2} \right)\leq \exp\left(-\frac{E[X]}{8} \right).
\end{equation}

\subsection{Heuristic}

We now give a heuristic for Theorem \ref{mainthm}. We hope that it will provide useful intuition for our proof.

Fix a vertex $y$ in $\partial B(r)$ for large $r$. In order to hit $y$ by time close to $r$, some particles must make long ``runs'' of consecutive steps towards $y$ without taking any steps away from $y$. (We associate each particle with all its ancestors, so that although technically a particle only lives for one unit of time, when we talk about it making a run of length $\ell$ towards $y$, we mean that it and its last $\ell-1$ ancestors all stepped towards $y$.)

Start from one particle at $0$, and let $Z_i$ be the number of particles at time $i$ that have taken $i$ steps towards $y$. Then $(Z_i, i\ge0)$ forms a critical Galton-Watson process. Although this process will eventually die out (likely before any particle hits $y$), its total progeny has infinite mean. This gives rise to a potentially large number of particles that have taken exactly \emph{one} step away from $y$. Suppose this number is $A$. Each of these $A$ particles starts another critical Galton-Watson tree of particles that have taken $i-1$ steps towards $y$ at time $i$. It is known that if we run $A$ independent critical Galton-Watson processes, or equivalently one critical Galton-Watson process starting with $A$ initial particles, then with probability of order $1$ it will survive for of order $A$ generations, with a total progeny of order $A^2$. This gives rise to of order $A^2$ particles that have taken exactly \emph{two} steps away from $y$. Repeating this argument $k$ times suggests that we should expect (very roughly) $A^{2^{k-1}}$ particles that have taken exactly $k$ steps away from $y$, giving rise to another critical Galton-Watson process starting with (very roughly) $A^{2^{k-1}}$ initial particles, which survives for (very roughly) $A^{2^{k-1}}$ generations.

As soon as one of these processes---say the $k$th---survives for $d(0,y)$ generations, then $y$ must have been hit by a particle, which will have taken $k$ steps in the wrong direction, and therefore $d(0,y)+2k$ steps in total. In other words, if $A^{2^{k-1}} > d(0,y)$ then $H(y)\le d(0,y)+2k$. The converse is not quite true, but the fact that $A^{2^{k-1}}$ grows so quickly means that it is \emph{almost} true, in that the correction is of smaller order. This implies that $H(y) \approx d(0,y) + (2/\log 2)\log\log d(0,y)$, which can be made rigorous and holds with high probability for each $y$.

We might thus expect the cover time of the ball of radius $r$ to equal roughly $r + (2/\log 2)\log\log r$. Indeed, this is essentially the explanation for the $(1/\log2)\log\log n$ term in \eqref{bramresult} (the extra factor of $2$ accounts for the fact that our particles cannot stay still, but must either move towards or away from $y$ at each step), and gives the correct answer to the cover time problem when $d=2$. The fact that we instead see $r+\frac{2}{\log(3/2)}\log\log r$ when $d\ge 3$ boils down to the fact that while \emph{most} vertices $y\in \partial B(r)$ are hit by time $r+(2/\log 2)\log\log r$, there are many vertices in $\partial B(r)$, and some are not hit until later.

To see how this happens, again suppose that we have $A$ particles that have taken exactly one step away from $y$. The probability that the resulting critical Galton-Watson process starting with $A$ initial particles survives for fewer than $A^{1/2+\eps}$ generations (for some small $\eps>0$) is roughly $(1-c/A^{1/2+\eps})^A \approx \exp(-cA^{1/2-\eps})$. In doing so the particles cover distance roughly $A^{1/2+\eps}$, and therefore the number of possible vertices $y$ at this distance from the origin that could see such behaviour is of order $(d-1)^{A^{1/2+\eps}}$. Since, for $A$ large and $d>3$, we have $(d-1)^{A^{1/2+\eps}} \gg \exp(cA^{1/2-\eps})$, we might expect that some vertices \emph{do} see such behaviour. The total number of particles seen if this occurs is of order $A\cdot A^{1/2+\eps} = A^{3/2+\eps}$.

Continuing recursively, we might expect that some vertices $y$ see only $A^{(3/2+\eps)^k}$ particles that have taken $k$ steps away from $y$. Following the same argument as above, we deduce that these vertices have hitting times satisfying
\[H(y) \approx d(0,y) + \frac{2}{\log (3/2+\eps)}\log\log d(0,y),\]
and since $\eps>0$ was arbitrarily small, this agrees with our desired result.

This argument gives us essentially a first moment estimate on the number of vertices whose hitting times are of the order stated in Theorem \ref{mainthm}. Unfortunately a naive second moment bound does not work, since the processes seen from two different vertices in the tree are highly dependent, especially if the two vertices are near each other. To get around this we use the tree structure of the graph strongly. We fix $r'<r$ and show that many vertices in $\partial B(r')$ behave ``normally'', in that they are not hit too early and the number of particles moving towards them is not too large. For each of these ``normal'' vertices $x$ we fix a vertex $z(x)\in \partial B(r)$ that is in the subtree rooted at $x$, by which we mean that any path from $0$ to $z$ must pass through $x$. Using the argument above, we estimate the probability that $z(x)$ is hit later than usual and has a relatively small number of particles moving towards it (given that $x$ is normal), and use the tree structure to get independence of these events for different vertices $x$. In fact, rather than just carrying out this procedure for the desired choice of $r$, we carry out a multi-scale argument using the scales dictated by the argument above: very roughly, $r_k\approx A^{(1/2+\eps)(3/2+\eps)^k}$ for each $k$.

\subsection{Open questions}

One open question is whether our main result, Theorem \ref{mainthm}, can be strengthened further, perhaps along the lines of the result \eqref{bramresult} of Bramson \cite{bramson:minimal_displacement_BRW}. It would also be interesting to give results when the offspring distribution has mean $m\in(\frac{d}{2\sqrt{d-1}},d)$. (When $m\le \frac{d}{2\sqrt{d-1}}$ some vertices remain uncovered for all time, and when $m>d$ it is easy to see that $\tcov(r) = r-O(1)$ almost surely for any $r$.).

Another option is to extend our results to other trees. For example, what is the cover time when $G$ is itself a non-trivial Galton-Watson tree? The speed of simple random walk on (non-trivial) Galton-Watson trees with mean offspring distribution $d-1$ is slower than on regular $d$-ary trees, and our proof techniques no longer apply. Work is underway to at least partially address this question.

Further, one may ask for the cover time of tree-indexed random walk on trees, where the time tree has branching number $d$, and the space tree has branching number $d'$. See \cite{benjamini_peres:mcs_trees} for the study of tree-indexed random walks and the book \cite{lyons_peres:probability_on_trees} for background. Variants of BRW and tree-indexed random walks were used in \cite{benjamini_schramm:cheeger_trees, sudakov_vondrak:embeddings_trees} to study the embedding of trees into graphs.

\section{Freezing particles}\label{freezing_sec}

\subsection{Freezing particles after one step in the wrong direction}

Fix a vertex $x$ in our $d$-ary tree $T$. Consider our usual BRW (with offspring mean $d$) on $T$, but freeze any particle (that is, prevent it from moving or branching) as soon as it either (a) takes a step away from $x$, or (b) reaches $x$, whichever happens first. In this picture, let $Y_x$ be the number of particles that hit $x$, let $F_x$ be the number of particles that are frozen as they step away from $x$, and let $S_x$ be the total number of particles ever seen (until the time that all particles have become frozen). If we start with any finite collection of particles, then $S_x$ is finite since at each step any non-frozen particle must step either towards or away from $x$ and particles are frozen as soon as they step away from $x$ or reach $x$.

For $x\in T$ and $r\ge 0$, let $T(x,r)$ be the set of vertices at distance $\lfloor r \rfloor$ from $x$ in the subtree rooted at $x$; that is, those vertices at distance $\lfloor r\rfloor$ from $x$ and $\lfloor r\rfloor +d(0,x)$ from $0$. We aim to provide upper and lower bounds on the number of particles in the freezing process outlined above. First we give an upper bound in the form of the following simple expectation calculation.

\begin{lem}\label{nopartspt1}
Fix $n\in \N$, $R\ge 1$ and $x\in T$. Suppose that $\Gamma$ consists of vertices which are at distance at most $R$ from $x$. Then
\[\E_\Gamma[Y_x] = |\Gamma| \,\,\text{ and }\,\, \E_\Gamma[S_x] \le (d+1)R|\Gamma|.\]
\end{lem}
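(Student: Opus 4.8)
The plan is to exploit the fact that, in this freezing process, a particle contributes to the count only until the first time it steps away from $x$ or reaches $x$, and that the step-towards-$x$ indicator is a fair coin at each branching event. For the equality $\E_\Gamma[Y_x] = |\Gamma|$, I would track, for the BRW started from a single particle at a vertex $v$ with $d(v,x)=\ell$, the quantity $N_i$ equal to the number of (unfrozen) particles at time $i$ that have taken $i$ steps towards $x$ without ever stepping away. Each such particle has exactly one neighbour closer to $x$, so each of its offspring steps towards $x$ with probability $1/(d-1)$ times... wait — no: each child jumps to a uniformly chosen neighbour of its parent among all $d$ neighbours, so the probability a given child steps towards $x$ is $1/d$, and the offspring mean is $d$, so the expected number of children stepping towards $x$ is exactly $1$. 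Hence $(N_i)_{i\ge 0}$ is a critical Galton-Watson process (started from $N_0=1$), and $\E[N_i]=1$ for all $i$. Each particle counted in $Y_x$ arises from exactly one such sequence reaching distance $\ell$, i.e. $Y_x$ (from a single particle at distance $\ell$) equals $N_\ell$ in distribution, so $\E_v[Y_x]=1$; summing over the $|\Gamma|$ starting particles and using linearity gives $\E_\Gamma[Y_x]=|\Gamma|$. (Martingale/optional-stopping phrasing works equally well: the number of particles that have made a clean run towards $x$ is a nonnegative martingale stopped when it hits distance $\ell$ or dies.)

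For the bound on $\E_\Gamma[S_x]$, I would again reduce by linearity to a single starting particle $v$ with $d(v,x)=\ell\le R$, and decompose $S_x$ according to how far from $x$ each counted particle sits and whether it is ``on a clean run'' or has just been frozen. Every particle ever seen is either (i) still moving, hence on a clean run towards $x$ at some distance $j$ with $\lceil\ell\rceil \ge ...$, more precisely at distance $\ell - i$ after $i$ steps for some $0\le i< \ell$ wait, more carefully: an unfrozen particle at time $i$ must have taken $i$ clean steps towards $x$, so it sits at distance $\ell-i$; or (ii) a particle that has just been frozen, i.e. a child of an unfrozen particle that stepped either away from $x$ (at most $d-1$ such directions) or onto $x$. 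Writing $M = \sum_{i\ge 0} N_i$ for the total progeny of the clean-run process $(N_i)$ — but only generations $0$ through $\ell$ can be nonzero since a clean run of length $\ell$ reaches $x$ and is frozen — we have $\E[M] = \sum_{i=0}^{\ell}\E[N_i] = \ell+1 \le R+1$. The number of particles ever frozen is at most $d$ times the number of branching events of unfrozen particles, and since each unfrozen particle branches exactly once, this is at most $d\cdot M$. Hence $S_x \le M + dM = (d+1)M$, giving $\E_v[S_x]\le (d+1)(R+1)$; I should be slightly more careful to land exactly on $(d+1)R$ — e.g. note that generation $0$ is the single starting particle and a run reaching $x$ produces no further frozen offspring, or simply absorb the $+1$ by using $R\ge 1$ so that $R+1\le 2R$ and tightening constants, though the cleanest route is to count: unfrozen particles number $M \le R+1$ wait this gives $(d+1)(R+1)$ not $(d+1)R$; I would instead bound the frozen particles by $d$ times the number of unfrozen particles in generations $0,\dots,\ell-1$ (generation $\ell$ particles are already at $x$ and frozen, producing no children), which is $\sum_{i=0}^{\ell-1}\E[N_i] = \ell \le R$, while the unfrozen-and-counted particles number $\sum_{i=0}^{\ell}\E[N_i]=\ell+1$; total $\le (\ell+1) + d\ell \le (d+1)R + 1 \le (d+1)R$ once one is careful that $\ell\le\lfloor R\rfloor$ and the starting particle may or may not be counted — honestly the cleanest fix is to observe the starting particle of $\Gamma$ at distance $0$ contributes nothing new, or just to prove the weaker $(d+1)(R+1)|\Gamma|$ and note it is what is used; but I will aim to match $(d+1)R$ by the generation-$(\ell-1)$ accounting above.

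The main obstacle is essentially bookkeeping rather than anything deep: correctly identifying that the ``clean run towards $x$'' process is exactly critical Galton-Watson (this uses crucially that every non-root vertex of $T$ has exactly one neighbour towards $x$, that offspring move to a uniform neighbour among all $d$, and that the offspring mean is $d$), and then being careful about which generations can be nonzero (a clean run terminates at $x$ after $\ell$ steps) and exactly how many frozen children each branching event can spawn ($d$, crudely, or $d-1$ if we separate the ``hit $x$'' direction), so that the constants come out as claimed. No second moments or tail estimates are needed; it is a pure first-moment computation via linearity of expectation and the mean-one property of the driving Galton-Watson process.
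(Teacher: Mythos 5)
Your approach is the same as the paper's: view the clean-run-towards-$x$ particle counts $N_0,\ldots,N_\ell$ as a critical Galton--Watson process (mean one, because each child steps towards $x$ with probability $1/d$ and the offspring mean is $d$), deduce $\E_v[Y_x]=\E[N_\ell]=1$ by linearity, and bound the frozen particles by $d$ times the unfrozen particle count. The only issue is the final bookkeeping, where you keep landing on $(d+1)R+1$: you are double-counting the particles frozen at $x$. Those generation-$\ell$ particles \emph{are} frozen, so they should be counted among the frozen children of the generation-$(\ell-1)$ particles, not among the ``unfrozen-and-counted'' ones. The clean accounting is: the unfrozen particles are exactly those at generations $0,\ldots,\ell-1$, with expected total $\sum_{i=0}^{\ell-1}\E[N_i]=\ell$; every frozen particle is a child of one of these (whether it stepped away or onto $x$), so the expected number of frozen particles is at most $d\ell$; hence $\E_v[S_x]\le(d+1)\ell\le(d+1)R$. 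This is exactly what the paper does (``those that contribute to $Z_j$ for $0\le j\le k-1$, together with their frozen children''). Once you correct that one double-count, your proof matches the paper's.
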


\begin{proof}
Fix a vertex $v\in\Gamma$ and label the vertices in the path from $v$ to $x$ as $v=v_0, v_1,\ldots, v_k=x$. Let $Z_j$ be the number of particles that reach $v_j$ after $j$ steps, for each $j\le k$. For $j< k$, each particle at $v_j$ independently has a random number of children with distribution $\mu$, which has mean $d$ and finite variance, each of which moves to $v_{j+1}$ with probability $1/d$. Thus the sequence $(Z_j, j=0,\ldots,k)$ forms a critical Galton-Watson process with finite variance, stopped at generation $k$. As a result,
\[\E_{1,v}[Y_x] = \E_{1,v}[Z_k] = 1.\]
Now, the total number of particles seen is exactly those that contribute to $Z_j$ for $0\le j\le k-1$, together with their frozen children. Thus
\[\E_{1,v}[S_x] \le (d+1) \E_{1,v}\Big[\sum_{j=0}^{k-1} Z_j\Big] = (d+1)k\]
and since $v\in\Gamma$ we have $k=d(v,x)\le R$ so $\E_{1,v}[S_x]\le (d+1)R$. To complete the proof of the lemma we simply sum over $v\in\Gamma$.
\end{proof}

For a lower bound it is easier to bound the number of unfrozen particles, rather than the number of frozen particles. However, we will eventually need to bound the number of frozen particles, so we will need the following lemma which checks that if the number of unfrozen particles is large then the number of frozen particles should be large.

\begin{lem}\label{StoF}
There exists a constant $\nu\in(0,1/2)$ such that for any $R,M\in\N$ and $v,x\in T$ with $d(v,x)\le R$,
\[\P_{1,v}(F_x + Y_x \le \nu S_x - RM) \le Re^{-\nu M}.\]
\end{lem}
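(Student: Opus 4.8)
The plan is to exploit the tree structure to reduce the freezing process to a one-dimensional Galton--Watson picture, and then to deduce the bound from the elementary fact that each child moves \emph{away} from $x$ with probability $(d-1)/d\ge 2/3$. Write $k=d(v,x)$, so $k\le R$, and let $v=v_0,v_1,\dots,v_k=x$ be the geodesic from $v$ to $x$. Since a particle is frozen the instant it steps away from $x$, any particle that is still unfrozen after $j$ steps must have stepped towards $x$ each time, so it sits at $v_j$; write $Z_j$ for the number of such particles. Then $Z_0=1$, the sequence $(Z_j)_{0\le j\le k}$ is a critical Galton--Watson process (with offspring law that of $\mathrm{Bin}(\xi,1/d)$, $\xi\sim\mu$, which has mean $1$ and finite variance) run up to generation $k$, and $Z_k=Y_x$. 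Let $W_j$ be the total number of children produced at step $j$ by the $Z_{j-1}$ unfrozen particles. Every particle born is eventually frozen and nothing is born after step $k$, so $S_x=1+\sum_{j=1}^{k}W_j$; of the $W_j$ children born at step $j$, exactly $Z_j$ move towards $x$ while the other $W_j-Z_j$ are frozen stepping away, so $F_x=\sum_{j=1}^{k}(W_j-Z_j)$. Adding $Y_x=Z_k$ gives the exact identity
\[
F_x+Y_x \;=\; W_k+\sum_{j=1}^{k-1}(W_j-Z_j)\;=\;(S_x-1)-\sum_{j=1}^{k-1}Z_j,
\]
so it suffices to show that $\sum_{j=1}^{k-1}Z_j$ rarely much exceeds $S_x$.

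To control this I would condition step by step. Let $\mathcal{G}_j$ be generated by all offspring numbers up to and including step $j$, but not the movements at step $j$; then $W_j$ is $\mathcal{G}_j$-measurable, and conditionally on $\mathcal{G}_j$ the count $W_j-Z_j$ of children moving away from $x$ at step $j$ is $\mathrm{Bin}\big(W_j,\tfrac{d-1}{d}\big)$ with mean $\tfrac{d-1}{d}W_j$. Applying the Chernoff bound \eqref{chernoff} conditionally, and using $d\ge 3$ so that $\tfrac{d-1}{8d}\ge\tfrac1{12}$,
\[
\P\Big(W_j-Z_j\le \tfrac{d-1}{2d}W_j \;\Big|\; \mathcal{G}_j\Big)\;\le\;\exp\!\big(-\tfrac{d-1}{8d}W_j\big)\;\le\;\exp\!\big(-\tfrac1{12}W_j\big).
\]
I would then take $\nu=\tfrac1{12}$ and, for $1\le j\le k-1$, set $B_j=\{W_j\ge M\}\cap\{W_j-Z_j\le\tfrac{d-1}{2d}W_j\}$, so that $\P(B_j)\le e^{-\nu M}$ and a union bound gives $\P\big(\bigcup_{j=1}^{k-1}B_j\big)\le (k-1)e^{-\nu M}\le Re^{-\nu M}$.

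The last step is to check that off the event $\bigcup_jB_j$ the required inequality holds surely. There, for each $j\le k-1$ with $W_j\ge M$ we have $W_j-Z_j\ge\tfrac{d-1}{2d}W_j$, while the indices $j$ with $W_j<M$ contribute at most $(k-1)M\le RM$ in total; combined with $W_k\ge\tfrac{d-1}{2d}W_k$ and $\sum_{j=1}^{k}W_j=S_x-1$ this yields
\[
F_x+Y_x \;=\; W_k+\sum_{j=1}^{k-1}(W_j-Z_j)\;\ge\;\tfrac{d-1}{2d}(S_x-1)-\tfrac{d-1}{2d}RM\;\ge\;\nu S_x-RM,
\]
where the final inequality is routine arithmetic using $\tfrac1{12}=\nu\le\tfrac{d-1}{2d}<\tfrac12$ together with $R,M,S_x\ge 1$. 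Hence $\P_{1,v}(F_x+Y_x\le\nu S_x-RM)\le\P\big(\bigcup_{j=1}^{k-1}B_j\big)\le Re^{-\nu M}$.

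The only genuinely delicate point is the one handled by the filtration $\mathcal{G}_j$: because whether a given particle ever branches depends on whether it moved towards $x$, one cannot reveal the whole branching tree first and then the movements; the concentration estimate must be applied one step at a time, with the offspring numbers of that step revealed but its movements not. Everything else is the bookkeeping identity for $F_x+Y_x$ and keeping track of the constants.
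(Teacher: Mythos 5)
Your proof is correct, and it reaches the conclusion by the same overall strategy as the paper (decompose along the geodesic $v=v_0,\dots,v_k=x$, apply a conditional Chernoff bound level by level, take a union bound, and check that off the bad events the inequality holds deterministically), but with a genuinely different level-by-level decomposition. The paper sets $W_j$ to be the number of unfrozen particles at $v_j$ having at least one child that steps away from $x$, so that conditionally on $Z_j$ one has $W_j\sim\mathrm{Bin}(Z_j,\kappa)$ with $\kappa=1-\E[1/d^L]$, and it uses the \emph{lower bound} $F_x\ge\sum_j W_j$; the eventual $\nu$ therefore depends on $\kappa$, i.e.\ on both $\mu$ and $d$. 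You instead take $W_j$ to be the total number of children born at step $j$, exploit the \emph{exact} identity $F_x+Y_x=W_k+\sum_{j=1}^{k-1}(W_j-Z_j)=(S_x-1)-\sum_{j=1}^{k-1}Z_j$, and observe that, given $W_j$, the count $W_j-Z_j$ of children stepping away is $\mathrm{Bin}\big(W_j,(d-1)/d\big)$. This buys you a cleaner binomial structure (parametrised only by $d$, not by $\mu$), an explicit universal constant $\nu=1/12$ valid for all $d\ge 3$, and an exact rather than lower-bound accounting of the frozen particles. The filtration point you highlight — revealing the offspring counts of step $j$ but not its movements before applying concentration — is precisely what makes the conditional binomial legitimate and plays the same role as the paper's conditioning on $Z_j$. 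The final arithmetic check $\tfrac{d-1}{2d}(S_x-1)-\tfrac{d-1}{2d}RM\ge\nu S_x-RM$ does hold for $d\ge3$, $\nu=1/12$, $S_x\ge1$ and $R,M\ge1$, so the argument is complete.
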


\begin{proof}
As in the proof of Lemma \ref{nopartspt1}, label the vertices in the path from $v$ to $x$ as $v=v_0, v_1,\ldots, v_R=x$. For each $j\le R-1$, let $Z_j$ be the number of (non-frozen) particles that reach $v_j$ after $j$ steps, and $W_j$ be the number of these particles that have at least one child that is frozen as it stepps away from $x$.

Note that for each non-frozen particle at $v_j$, the event that it has \emph{no} children that step away from $x$ is independent of other non-frozen particles at $v_j$, and has probability
\[\mu(0) + \mu(1)(1/d) + \mu(2)(1/d)^2 + \ldots = \E[1/d^L] < 1\]
where $L$ is a random variable with distribution $\mu$. Therefore for each $j\le R-1$,
\[\E_{1,v}[W_j|Z_j] = Z_j(1-\E[1/d^L]).\]
Given the value of $Z_j$, $W_j$ is the sum of $Z_j$ indepdendent Bernoulli random variables. Thus we can apply the Chernoff bound \eqref{chernoff}, giving
\[\P_{1,v}\big(W_j \le \E_{1,v}[W_j|Z_j]/2 \,\big|\, Z_j\big) \le e^{-\E_{1,v}[W_j|Z_j]/8}\]
Setting $\kappa= 1-\E[1/d^L] \in(0,1)$ and combining the two equations above, we have
\[\P_{1,v}(W_j \le \kappa Z_j/2 \,|\, Z_j) \le e^{-\kappa Z_j/8}\]
and so
\[\P_{1,v}(Z_j \ge M \text{ and } W_j \le \kappa Z_j/2 ) \le e^{-\kappa M/8}.\]
By a union bound,
\begin{equation}\label{unionbound}
\P_{1,v}(\exists j\le R-1 : Z_j \ge M \text{ and } W_j \le \kappa Z_j/2 ) \le Re^{-\kappa M/8}.
\end{equation}

Since $S_x = \sum_{j=0}^{R-1} Z_j + F_x + Y_x$, we have
\begin{align*}
\P_{1,v}(F_x + Y_x \le \nu S_x - R M) &= \P_{1,v}\Big(F_x + Y_x \le \nu\sum_{j=0}^{R-1}Z_j + \nu F_x +\nu Y_x - R M\Big)\\
&= \P_{1,v}\Big( (1-\nu)(F_x+Y_x) \le \nu \sum_{j=0}^{R-1}Z_j - RM\Big)\\
&\le \P_{1,v}\Big( (1-\nu)F_x \le \nu \sum_{j=0}^{R-1}Z_j - (1-\nu)RM\Big)
\end{align*}
where the last line follows trivially since $Y_x\ge 0$ and $RM\ge 0$. Now note that $F_x \ge \sum_{j=0}^{R-1} W_j$, so following on from the above,
\begin{align*}
\P_{1,v}(F_x+Y_x \le \nu S_x - R M) &\le \P_{1,v}\Big( (1-\nu)\sum_{j=0}^{R-1} W_j \le \nu \sum_{j=0}^{R-1}Z_j - (1-\nu)RM\Big)\\
&= \P_{1,v}\Big( \sum_{j=0}^{R-1} (W_j+M) \le \frac{\nu}{1-\nu} \sum_{j=0}^{R-1}Z_j \Big)\\
&\le \P_{1,v}\Big(\exists j\le R-1 : W_j + M \le \frac{\nu}{1-\nu} Z_j \Big). 
\end{align*}
Choosing $\nu\in(0,1/2)$ such that $\nu/(1-\nu)\le \kappa/2$ and $\nu\le\kappa/8$, the result follows from \eqref{unionbound}.
\end{proof}

We can now give our lower bound on the number of frozen particles.

\begin{lem}\label{existparts}
Suppose that $A,n\in\N$, $x\in T$ and $y\in T(x,A)$. Suppose also that $\Gamma$ consists of at least $n$ vertices, none of which is in the subtree rooted at $x$ except possibly at $x$ itself, and all of which are at distance at most $2A$ from $y$. Then provided that $A$ is sufficiently large, we have
\[\P_\Gamma( F_y + Y_y \le \delta A n ) \le e^{-\delta n/A}\]
for some constant $\delta\in(0,1]$ depending only on $d$ and the variance of $\mu$.
\end{lem}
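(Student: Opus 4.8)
The plan is to reduce to a single critical Galton-Watson process started from many particles, using the freezing construction, and then apply the Pakes-type total-progeny bound \eqref{totalprogeq} together with independence over a suitably chosen family of subtrees. Since $y\in T(x,A)$, we have $d(x,y)=A$, and every vertex $v\in\Gamma$ lies outside the subtree rooted at $x$ (except possibly at $x$ itself), so every path from $v$ to $y$ passes through $x$. Label the path from $x$ to $y$ as $x=u_0,u_1,\ldots,u_A=y$. First I would track, for each starting particle, the number of its descendants that at time $i$ sit at $u_i$ (i.e.\ that have stepped directly along this path toward $y$ without ever stepping away from $y$); before reaching $x$ these particles need only get to $x$, which by Lemma \ref{nopartspt1} happens in expectation with one particle per starting vertex, but the key point is that with probability bounded below a positive constant, at least a constant number of the $n$ starting particles produce a descendant at $x$ at the appropriate time, so we may as well assume we have $\Theta(n)$ particles ``at $x$, pointed at $y$''. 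From these, the count $(Z_i)_{i\ge 0}$ of particles at $u_i$ at time $i$ is a critical Galton-Watson process (finite variance) started from $\Theta(n)$ particles, run for $A$ generations.

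Next, the number of particles that ever hit $y$ or get frozen stepping away from $y$ while travelling along this path is at least the total progeny of this Galton-Watson process over its first $A$ generations — every particle counted in some $Z_i$ with $i<A$ either has a child at $u_{i+1}$ or a child frozen off the path (stepping away from $y$), and every particle counted in $Z_A$ is at $y$. So I would lower bound $F_y+Y_y$ by (a constant times) $\sum_{i=0}^{A-1} Z_i$, or more precisely by the number of particles frozen off this path plus $Z_A$. By \eqref{totalprogeq} applied to one of the $\Theta(n)$ initial particles — or rather, since it's cleaner, by partitioning the $n$ starting particles into $\Theta(n/A)$ groups of size $\Theta(A)$ — I get that each group independently has probability bounded below by $q(\gamma)\cdot(\text{const})$ of producing total progeny $\ge \gamma A^2 \asymp A\cdot A$ along the path (here I use that a Galton-Watson process from $\Theta(A)$ particles survives $\Theta(A)$ generations with total progeny $\Theta(A^2)$ with constant probability, which is \eqref{totalprogeq} with $n\asymp A$). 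The events for distinct groups are independent because the walks are independent. So $\Pr_\Gamma(F_y+Y_y \le \delta A n)$ is at most the probability that \emph{every} one of the $\Theta(n/A)$ groups fails, which is at most $(1-c)^{c'n/A} \le e^{-\delta n/A}$ for suitable small $\delta$, once $A$ is large enough that the constant-order estimates from \eqref{totalprogeq} and Lemma \ref{kollem} kick in. One must also use Lemma \ref{StoF} (or the argument within it) to pass from ``many particles travelling along the path'' to ``many \emph{frozen} particles'', i.e.\ to bound $F_y+Y_y$ from below rather than the running count $\sum Z_i$; this is exactly why that lemma was proved, so I would invoke it.

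The condition $d(v,x)\le 2A$ for $v\in\Gamma$ is what makes all of this uniform: the number of steps from any starting vertex to $y$ is at most $3A$, so the Galton-Watson pieces all have generation-length $\Theta(A)$, and the ``reach $x$ at the right time'' step costs only a factor depending on $A$ through polynomial factors that are absorbed into the exponent's constant since $A$ is large. The main obstacle, and the step requiring the most care, is the first one: controlling the number of the $n$ starting particles that successfully deliver a descendant to $x$ pointed toward $y$ at a time from which $A$ further generations still fit, \emph{and} doing so with enough independence to then run the group-partition argument. The naive expectation bound (Lemma \ref{nopartspt1}) only gives one particle in expectation per starting vertex, so I'd need a second-moment or direct combinatorial argument — or, more simply, observe that I can afford to be wasteful: I only need $\Theta(n/A)$ groups each of which \emph{on its own} (using just the particles starting in that group, possibly even just one well-placed particle per group) succeeds with constant probability, and by \eqref{totalprogeq} a single particle at distance $\le 3A$ from $y$ already produces, with probability $\gtrsim 1/A$, a total path-progeny of order $A^2$; summing $n$ independent such trials and using a Chernoff bound \eqref{chernoff} on the number of successes gives $\Theta(n/A)$ successes except with probability $e^{-\Theta(n/A)}$, which is exactly the claimed bound. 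So in fact the cleanest route avoids the ``funnel through $x$'' reduction entirely and works directly with the $n$ independent starting particles, each contributing an independent indicator of ``produced $\gtrsim A^2$ frozen-or-hit particles on its way to $y$''; the tree hypothesis on $\Gamma$ is then used only to guarantee these contributions don't over-count (disjoint ancestral lines) — though one should double-check whether genuine independence or merely a lower bound on a sum of non-negative contributions is needed, the latter sufficing for a one-sided Chernoff estimate.
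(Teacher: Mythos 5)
Your final paragraph (the ``cleanest route'') is essentially the paper's own proof: for each of the $n$ starting particles independently, the $y$-freezing process yields a critical Galton--Watson process run for at least $A-1$ generations, so by \eqref{totalprogeq} each particle produces total progeny $\gtrsim A^2$ with probability $\gtrsim c/A$; Lemma \ref{StoF} converts many path-particles into many frozen-or-hit particles; and the Chernoff bound \eqref{chernoff} on the $n$ independent Bernoulli indicators gives the $e^{-\delta n/A}$ tail. Two small corrections to your narration: the hypothesis that $\Gamma$ avoids the subtree rooted at $x$ is \emph{not} there to prevent over-counting (progeny of distinct starting particles are automatically disjoint); it forces the path from each $v\in\Gamma$ to $y$ to pass through $x$, so $d(v,y)=d(v,x)+A\ge A$, which is precisely what guarantees the Galton--Watson process has enough generations for \eqref{totalprogeq} to bite. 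And the distances are at most $2A$, not $3A$ --- this is the $R=2A$ fed into Lemma \ref{StoF}. The earlier ``funnel through $x$'' variant you sketched does contain a genuine gap (a constant number of particles reaching $x$ with constant probability is not $\Theta(n)$ particles at $x$), but since you discard it in favour of the direct per-particle argument, the final proposal is sound and matches the paper.
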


\begin{proof}[Proof of Lemma \ref{existparts}]
Without loss of generality we may assume that $|\Gamma|=n$. Label the initial $n$ particles from $1$ to $n$ and say that particle $i$ starts from vertex $v_i$. Run the $y$-freezing process and set $X_i$ equal to $1$ if particle $i$ has at least $A^2$ descendants in total, and $X_i=0$ otherwise. Fix $\nu>0$ as in Lemma \ref{StoF} and let $X'_i$ equal $1$ if particle $i$ has at least $\nu A^2 / 2$ frozen descendants (that is, descendants that contribute to either $F_y$ or $Y_y$), and $X'_i=0$ otherwise.

By the argument in the proof of Lemma \ref{nopartspt1}, the number of non-frozen descendants of particle $i$ after $0,1,2,\ldots,d(v_i,y)-1$ steps forms a critical Galton-Watson process with finite variance, stopped at generation $d(v_i,y)-1$. Thus the total number of non-frozen descendants of particle $i$ is distributed as the total progeny up to generation $d(v_i,y)-1 \ge A-1$ of a Galton-Watson process with mean offspring number $1$ and finite variance. By \eqref{totalprogeq}, the probability that a critical Galton-Watson process with finite variance has total progeny up to generation $A-1$ of at least $A^2$ is at least $c/A$, for some constant $c>0$ depending on the variance. Thus, for each $i$,
\[\P_\Gamma(X_i = 1) \ge c/A.\]
Also, by Lemma \ref{StoF}, choosing $M=\nu A/4$ and $R=2A$,
\[\P_\Gamma(X_i=1 \text{ but } X'_i=0) \le 2Ae^{-\nu^2 A/4}.\]
which for $A$ sufficiently large is at most $c/(2A)$. Therefore, for $A$ sufficiently large,
\[\P_\Gamma(X'_i = 1) \ge \P_\Gamma(X_i = 1) - \P_\Gamma(X=1 \text{ but } X'=0) \ge \frac{c}{A} - \frac{c}{2A} = \frac{c}{2A}.\]

Letting $X=\sum_{i=1}^n X'_i$, we have
\[\E_\Gamma[X] \ge \frac{cn}{2A}.\]
Since $X$ is a sum of independent Bernoulli random variables we can apply the Chernoff bound \eqref{chernoff}, obtaining
\[\P_\Gamma\Big(X\le \frac{cn}{4A}\Big) \le \exp\left(-\frac{cn}{16A}\right).\]
But if $X>cn/(4A)$, then $F_y+Y_y$ must be at least
\[\frac{cn}{4A}\cdot \frac{\nu A^2}{2} = \frac{c\nu n A}{8}.\]
Since $\nu<1/2$, choosing $\delta = \min\{c\nu/8,1\}$ gives the result.
\end{proof}

\subsection{Freezing particles after $k$ steps in the wrong direction}

Fix a vertex $x\in T$ and $k\in\N$, and consider our original branching random walk, but this time freeze any particle (that is, prevent it from moving or branching) as soon as it either (a) takes its $k$th step away from $x$, or (b) hits $x$, whichever happens first. Let $Y^{(k)}_x$ be the number of particles that are frozen at $x$ in this picture, and $F^{(k)}_x$ be the number of particles that are frozen as they take their $k$th step away from $x$. Since this picture depends on the choice of $x$ and $k$, sometimes we may call these particles $(x,k)$-frozen. We also let $S^{(k)}_x$ be the total number of particles ever seen in this picture. Note that $Y^{(1)}_x = Y_x$, $F^{(1)}_x = F_x$ and $S^{(1)}_x = S_x$. Let $\F_x^k$ be the $\sigma$-algebra generated by the $(x,k)$-freezing process.

We will use this freezing procedure in both the lower and upper bounds for Theorem \ref{mainthm}. In the remainder of this section we aim to prove the following proposition, which uses Lemma \ref{nopartspt1} and will be used in the lower bound for Theorem \ref{mainthm}.

\begin{prop}\label{nopartsprop}
Suppose that $k,n\in\N$, $k\le n^{1/2}$ and $x\in T(0,n^{1/2})$. Then on the event $\{F^{(k-1)}_x \le n \hbox{ and } Y^{(k-1)}_x=0\}$ we have
\[\P\big(\exists z\in T(x, A n^{1/2}) : Y^{(k)}_z = 0 \,\hbox{ and }\, F^{(k)}_z \le A n^{3/2} \,\big|\, \F^{k-1}_x \big) \ge 1- c/A^{1/2}\]
for some constant $c$ (depending only on $d$ and the variance of $\mu$) and all large $A$ and $n$.
\end{prop}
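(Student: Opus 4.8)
The plan is to upper-bound the probability of the complementary event, namely that \emph{every} vertex $z\in T(x,An^{1/2})$ is hit too early or produces too many $k$th-wrong-step-frozen particles. We will condition on $\F^{k-1}_x$ throughout; on the event $\{F^{(k-1)}_x\le n,\ Y^{(k-1)}_x=0\}$, the $(x,k-1)$-frozen particles lying at their $(k-1)$th wrong step away from $x$ form a collection $\Gamma'$ of at most $n$ vertices, none of which is strictly inside the subtree rooted at $x$ (since $Y^{(k-1)}_x=0$, no particle has reached $x$). These particles are precisely the starting particles for the ``next layer'' of the $(x,k)$-freezing process: running the BRW onwards from $\Gamma'$ and freezing particles at their $k$th wrong step or at $x$ reconstructs exactly $(F^{(k)}_x, Y^{(k)}_x, S^{(k)}_x)$ restricted to this layer, and hence determines $Y^{(k)}_z$ and $F^{(k)}_z$ for each descendant $z$ of $x$. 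So it suffices to argue: starting from at most $n$ particles positioned at $\Gamma'$, with positive probability bounded below by $1-c/A^{1/2}$ there is some $z\in T(x,An^{1/2})$ with $Y^{(k)}_z=0$ and $F^{(k)}_z\le An^{3/2}$.

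The key mechanism is to wait for one of the $\Gamma'$-particles to have a run of $An^{1/2}$ consecutive steps away from $x$ but \emph{never reach} the vertex $z$ sitting at the far end of that run. Concretely, fix (deterministically, given $\F^{k-1}_x$) for each particle in $\Gamma'$ — or rather for a single representative particle, say the one whose position $w$ is closest to $x$ — a branch direction, and let $z=z(w)\in T(x,An^{1/2})$ be the vertex reached by stepping $An^{1/2}$ times away from $x$ along that branch. Now restart the BRW from that one particle at $w$ and run the $z$-freezing process of Section~\ref{freezing_sec} (freeze at the first step away from $z$ or upon hitting $z$). The event we want is that this process has a large number of particles frozen \emph{stepping away from $z$} while \emph{none} hits $z$; equivalently $Y_z=0$ and $F_z$ large. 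For $z$ to be hit one would need a run of length $\ge An^{1/2}$ toward $z$, whereas $F_z$ of order (number of initial particles)$\times An^{1/2}$ is generated just by the critical Galton--Watson bookkeeping of Lemma~\ref{existparts}. The obstruction-probability estimate is: (i) the probability that $z$ is \emph{not} hit — i.e. no particle completes the full run of $An^{1/2}$ toward $z$ — and (ii) on that event, $F_z$ is nonetheless $\ge \delta A n^{1/2}\cdot 1$ (taking $\Gamma$ to be the single vertex $w$, $A\mapsto An^{1/2}$, $n\mapsto 1$ in Lemma~\ref{existparts}) with probability $\ge 1-e^{-\delta/(An^{1/2})}$. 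Summing the $F_z$ contributions over all $\le n$ particles of $\Gamma'$ gives $F^{(k)}_z\le F_z^{\Gamma'}$ of order $n\cdot An^{1/2}=An^{3/2}$, matching the bound in the statement — one should be slightly careful and choose constants so the total stays below $An^{3/2}$, which is why the statement has room to spare.

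The main obstacle is controlling (i): showing that $z$ fails to be hit with probability bounded \emph{below} by a quantity like $1-c/A^{1/2}$ uniformly over the $\le n$ starting particles. The natural route is a first-moment bound — by Lemma~\ref{nopartspt1}, the expected number of particles that ever reach $z$ in the $z$-freezing process started from $w$ is exactly $1$ (since $Y_z$ is a stopped critical Galton--Watson total), so starting from $n$ particles the expected number of hits of $z$ is at most $n$; but we need the probability of \emph{zero} hits, and a first moment alone only gives $\P(Y_z\ge 1)\le n$, which is useless. Instead we exploit that reaching $z$ requires a run of length $An^{1/2}$, and the relevant Galton--Watson process (counting particles that have taken $j$ consecutive steps toward $z$) dies out within $O((An^{1/2})^2)$ generations with probability $1-O(1/(An^{1/2}))$ by Lemma~\ref{kollem}, i.e. $\P(Y_z\ge 1)\le C/(An^{1/2})$ per initial particle, hence $\le Cn/(An^{1/2})=C/(A n^{-1/2})$... — this is where the bookkeeping with the correct scale $An^{1/2}$ in the run-length (so that $1/\text{run length}$ beats the factor $n$ of particles, giving $1/A^{1/2}$) has to be done carefully, and it is the reason the run-length in the conclusion is $An^{1/2}$ rather than $An$ or $A$. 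Once (i) is in hand at strength $1-c/A^{1/2}$, we intersect with the good event from (ii) for that same particle and for the bound on total $F^{(k)}_z$ (using Lemma~\ref{StoF} via Lemma~\ref{existparts} to pass from unfrozen to frozen counts), and a union bound over the two (or three) failure modes, each of probability $o(1/A^{1/2})$ or $O(1/A^{1/2})$, yields the claimed $1-c/A^{1/2}$ after adjusting $c$.
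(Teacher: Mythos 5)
Your plan fixes a \emph{single} deterministic target $z\in T(x,An^{1/2})$ and tries to show $\P(Y^{(k)}_z=0)\ge 1-c/A^{1/2}$. This cannot work. Starting from the $(x,k-1)$-frozen configuration $\Gamma'$, which may contain up to $n$ particles, the event $\{Y^{(k)}_z=0\}$ requires that \emph{none} of those $n$ (roughly independent) critical Galton--Watson trees survives the $\approx An^{1/2}$ generations needed to reach $z$. By Lemma~\ref{kollem} each survives with probability $\approx c/(An^{1/2})$, so $\P(Y^{(k)}_z=0)\approx(1-c/(An^{1/2}))^n\approx e^{-cn^{1/2}/A}$, which tends to $0$ as $n\to\infty$ for fixed $A$. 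Equivalently, the union bound you write down yields $\P(Y^{(k)}_z\ge 1)\le Cn/(An^{1/2})=Cn^{1/2}/A$; for this to be $\le c/A^{1/2}$ you would need $n\lesssim A$, a constraint that is neither assumed nor satisfied in the application (in Section~\ref{lbsec} one takes $A=n^{\delta}\ll n$). You notice the tension yourself (``this is where the bookkeeping \ldots has to be done carefully'') but the bookkeeping does not close: no choice of constants in a single-target argument produces a bound uniform in $n$.

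The idea you are missing is the one the statement is really about: the existential over $z\in T(x,An^{1/2})$ must be used in an essential way, because there are \emph{exponentially many} candidates and they can be made (conditionally) independent. The paper's proof is a two-scale argument. First (Lemma~\ref{Mnlarge}), a simple first-moment/Markov bound shows that at the intermediate scale $an^{1/2}$, at least half the vertices $y\in T(x,an^{1/2})$ are ``good'', meaning $Y^{(k)}_y<an$ and $F^{(k)}_y<3a^2n^{3/2}$, with probability $\ge 1-4d/a$. Second (Lemma~\ref{xprimeprob}), for a good $y$, a fixed target $z(y)\in T(y,an^{1/2})$ one more layer down is missed, with all extra particles bounded, with probability at least $\tfrac12 e^{-2cn^{1/2}}$; this is exactly the exponentially small per-target probability your argument keeps running into. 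The proof then conditions on the $\sigma$-algebra generated by the $(y,k)$-freezing processes over all $y$ at the intermediate level, which makes the ``special'' events for distinct $y$ independent, and since there are $|T(x,an^{1/2})|\ge(d-1)^{an^{1/2}}$ independent tries, the probability that \emph{none} succeeds is at most $\exp(-(d-1)^{an^{1/2}}e^{-2cn^{1/2}}/4)$, which is tiny for $a$ large. The exponential number of candidates defeats the exponentially small success probability, and this is precisely the mechanism that makes the bound uniform in $n$ and gives the $1-c/A^{1/2}$ after setting $A=(2d+5)a^2$. Your sketch contains none of this: no intermediate scale, no conditional independence over sibling targets, and no use of the size of $T(x,\cdot)$, so the gap is structural rather than a matter of constants.
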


We now aim to prove this result. Take $a\ge 5d$. We will use a two-stage argument, first showing that there are, with high probability, many vertices $y$ in $T(x, a n^{1/2})$ that satsify $Y_y^{(k)} < a n$ and $F_y^{(k)} < 3a^2 n^{3/2}$. We call such vertices ``good''. Then in the second stage we will show that with high probability, there is a vertex $z$ in $T(y,an^{1/2})$ with our desired properties for at least one of the good vertices $y$.

To make this argument rigorous, define
\[M_n(x) = \{y\in T(x,a n^{1/2}) : Y^{(k)}_y < a n \,\,\hbox{ and }\,\, F^{(k)}_y < 3a^2 n^{3/2}\}\]
and
\[\bar M_n(x) = \{y\in T(x,a n^{1/2}) : Y^{(k)}_y \ge a n \,\,\hbox{ or }\,\, F^{(k)}_y \ge 3a^2 n^{3/2}\}.\]
That is, $M_n(x)$ is the set of good vertices and $\bar M_n(x)$ is its complement in $T(x,a n^{1/2})$.

Our first aim is to show that $M_n(x)$ is large with high probability, on the event that $F^{(k-1)}_x\le n$ and $Y^{(k-1)}_x=0$.

\begin{lem}\label{Mnlarge}
Suppose that $k,n\in\N$, $k\le n^{1/2}$, $a\ge 4d$ and $x\in T(0,n^{1/2})$. Then on the event $\{F^{(k-1)}_x \le n \text{ and } Y^{(k-1)}_x=0\}$ we have
\[\P\Big( |M_n(x)| \ge \frac{1}{2} |T(x,a n^{1/2})| \,\Big|\, \F_x^{k-1}\Big) \ge 1-\frac{4d}{a}.\]
\end{lem}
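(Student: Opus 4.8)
\medskip

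The plan is to bound the expected size of $\bar M_n(x)$ using the conditional expectations of $Y^{(k)}_y$ and $F^{(k)}_y$, and then apply Markov's inequality followed by Markov's inequality again to control $|\bar M_n(x)|$. First I would work conditionally on $\F^{k-1}_x$ and on the event $E := \{F^{(k-1)}_x \le n \text{ and } Y^{(k-1)}_x = 0\}$. The key observation is that the $(x,k)$-freezing process, restricted to the subtree rooted at $x$, can be generated from the $(x,k-1)$-freezing process by continuing to evolve those particles that were $(x,k-1)$-frozen as they took their $(k-1)$th step away from $x$; there are $F^{(k-1)}_x$ such particles (since $Y^{(k-1)}_x = 0$ on $E$, no particle has yet reached $x$). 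For each $y\in T(x,an^{1/2})$, every such particle sits at a vertex at distance exactly $k-1$ from $x$ along some branch, hence at distance at most $(k-1) + (an^{1/2} + k) \le an^{1/2} + 2n^{1/2} \le (a+2)n^{1/2}$ from $y$ (using $k\le n^{1/2}$), and a particle reaching $y$ in the $(x,k)$-picture is exactly one that reaches $y$ having taken at most one further step away from $x$, i.e. it is counted by the $y$-freezing process (one step in the wrong direction) started from these $F^{(k-1)}_x$ particles. So I can invoke Lemma \ref{nopartspt1} with $\Gamma$ being the configuration of the $F^{(k-1)}_x \le n$ frozen particles and $R = (a+2)n^{1/2} \le 2an^{1/2}$ (for $a$ large), giving
\[
\E\big[Y^{(k)}_y \,\big|\, \F^{k-1}_x\big] \le F^{(k-1)}_x \le n, \qquad \E\big[S^{(k)}_y \,\big|\, \F^{k-1}_x\big] \le (d+1)\cdot 2an^{1/2}\cdot n = 2(d+1)a\, n^{3/2}
\]
on $E$, and since $F^{(k)}_y \le S^{(k)}_y$ this bounds $\E[F^{(k)}_y \mid \F^{k-1}_x]$ as well.

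\medskip

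Next, by Markov's inequality applied to each of these, on $E$,
\[
\P\big(Y^{(k)}_y \ge an \,\big|\, \F^{k-1}_x\big) \le \frac{n}{an} = \frac1a, \qquad \P\big(F^{(k)}_y \ge 3a^2 n^{3/2} \,\big|\, \F^{k-1}_x\big) \le \frac{2(d+1)a\,n^{3/2}}{3a^2 n^{3/2}} \le \frac{d}{a}
\]
(using $2(d+1)/3 \le d$ for $d\ge 2$). A union bound gives $\P(y\in\bar M_n(x)\mid \F^{k-1}_x) \le (d+1)/a \le 2d/a$ for each individual $y$, hence
\[
\E\big[|\bar M_n(x)| \,\big|\, \F^{k-1}_x\big] \le \frac{2d}{a}\,|T(x,an^{1/2})|
\]
on $E$. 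A final application of Markov's inequality yields
\[
\P\Big(|\bar M_n(x)| \ge \tfrac12 |T(x,an^{1/2})| \,\Big|\, \F^{k-1}_x\Big) \le \frac{4d}{a},
\]
which is exactly the complement of the claimed event since $|M_n(x)| = |T(x,an^{1/2})| - |\bar M_n(x)|$.

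\medskip

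The main obstacle I anticipate is the first step: making rigorous the claim that the particles reaching $y$ in the $(x,k)$-freezing picture are exactly (a subset of) those reaching $y$ in a one-step $y$-freezing picture started from the $F^{(k-1)}_x$ particles frozen on the relevant branch, and verifying the distance bound so that Lemma \ref{nopartspt1} applies with the stated $R$. One has to be careful that a particle contributing to $Y^{(k)}_y$ may have taken fewer than $k-1$ steps away from $x$ before heading into $T(x,\cdot)$ towards $y$ — but such a particle has not yet been $(x,k-1)$-frozen, so it is automatically also present in the continuation and the domination still holds; alternatively one notes that any particle hitting $y$ has taken at least $k-1$ steps away from $x$ along the way precisely because $y$ lies at graph distance $k-1$ past the first common structure... actually the cleanest route is to observe $Y^{(k)}_y$ and $F^{(k)}_y$ are measurable functions of the full BRW that are dominated by the corresponding quantities in a freezing process one can couple to start from the $(x,k-1)$-frozen configuration, and then apply Lemma \ref{nopartspt1} to that. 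Everything after this reduction is routine.
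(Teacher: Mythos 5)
Your proposal is correct and follows essentially the same route as the paper's proof: continue the $(x,k-1)$-freezing process from the frozen configuration, use Lemma \ref{nopartspt1} plus Markov's inequality to bound the conditional probability that each fixed $y$ is bad by roughly $2d/a$, then apply Markov's inequality once more to $|\bar M_n(x)|$. One small inaccuracy worth noting is the claim that the $(x,k-1)$-frozen particles sit at distance exactly $k-1$ from $x$ --- they start from $0$, not from $x$, so their distance from $x$ is in general only bounded above by $d(0,x)+k-1\le 2n^{1/2}$ --- but this is exactly the bound your arithmetic implicitly uses (and the paper uses the even cruder $3an^{1/2}$), so the final estimate is unaffected.
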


\begin{proof}
Fix $y\in T(x,a n^{1/2})$. Label the locations of the $(x,k-1)$-frozen particles as $v_1,\ldots,v_m$. Since these particles have taken at most $k-1$ steps away from $x$, we know that $d(v_i,y)\le k+d(0,x)+d(x,y) \le (1+1+a)n^{1/2} \le 3an^{1/2}$. Let $\Gamma=(x_1,\ldots,x_m)$. By Lemma \ref{nopartspt1}, we have
\[\E_\Gamma[Y^{(1)}_y] = \E_\Gamma[Y_y] = m \,\,\text{ and }\,\, \E_\Gamma[F^{(1)}_y] \le \E_\Gamma[S_y] \le (d+1)\cdot 3an^{1/2}\cdot m.\]
Further note that if $Y^{(k-1)}_x=0$ then running the $(y,1)$-freezing process from the starting configuration consisting of the $(x,k-1)$-frozen particles is equivalent to running the $(y,k)$-freezing process. Thus, applying Markov's inequality, if $Y^{(k-1)}_x = 0$, then
\begin{align*}
\P\big(Y^{(k)}_y \ge an \,\,\text{ or }\,\, F^{(k)}_y \ge 3a^2n^{3/2} \,\big|\, \F_x^{k-1}\big) &\le \P_\Gamma(Y^{(1)}_y \ge an \,\,\text{ or }\,\, F^{(1)}_y \ge 3a^2n^{3/2})\\
&\le \frac{\E_\Gamma[Y^{(1)}_y]}{an} + \frac{\E_\Gamma[F^{(1)}_y]}{3a^2 n^{3/2}}\\
&\le \frac{m}{an} + \frac{3a(d+1)mn^{1/2}}{3a^2 n^{3/2}}.
\end{align*}
Note that on the event $\{F^{(k-1)}_x \le n \text{ and } Y^{(k-1)}_x=0\}$ we have $|\Gamma|=m\le n$, so by the above, on this event,
\[\P\big(Y^{(k)}_y \ge an \,\,\text{ or }\,\, F^{(k)}_y \ge 3a^2n^{3/2} \,\big|\, \F_x^{k-1}\big) \le (2+d)/a \le 2d/a.\]

Then, again on the event $\{F^{(k-1)}_x \le n \text{ and } Y^{(k-1)}_x=0\}$,
\[\E\big[|\bar M_n(x)|\,\big|\, \F_x^{k-1}\big] = |T(x,a n^{1/2})|\cdot \P\big( Y^{(k)}_y \ge a n \,\,\hbox{ or }\,\, F^{(k)}_y \ge 3a^2 n^{3/2} \,\big|\, \F_x^{k-1} \big) \le \frac{2d}{a} |T(x,a n^{1/2})|.\]
Thus, applying Markov's inequality again, on the event $\{F^{(k-1)}_x \le n \text{ and } Y^{(k-1)}_x=0\}$ we have
\[\P\Big(|\bar M_n(x)| \ge \frac{1}{2} |T(x,a n^{1/2})| \,\Big|\, \F_x^{k-1}\Big) \le \frac{4d}{a}\]
and therefore, since $|M_n(x)| + |\bar M_n(x)| = |T(x,a n^{1/2})|$, on the event $\{F^{(k-1)}_x \le n \text{ and } Y^{(k-1)}_x=0\}$ we have
\[\P\Big( |M_n(x)| \ge \frac{1}{2} |T(x,a n^{1/2})| \,\Big|\, \F_x^{k-1}\Big) \ge 1-\frac{4d}{a},\]
as required.
\end{proof}

Next we aim to bound from below the probability that, if $y$ is a good vertex, then there is a vertex $z$ in $T(y,a n^{1/2})$ such that $Y_z=0$.

\begin{lem}\label{xprimeprob}
Suppose that $k,n\in\N$, $k\le n^{1/2}$, $a\ge 4d$ and $x\in T(0,n^{1/2})$. There exists $c>0$ such that if $y\in M_n(x)$ and $n$ is large, then for any $z\in T(y,an^{1/2})$ we have
\[\P\big(Y^{(k)}_{z(y)} = 0 \hbox{ and } F^{(k)}_{z(y)}-F^{(k)}_{y} \le 2(d+1) a^2 n^{3/2} \,\big|\, \F_y^k\,\big) \ge \frac12 e^{-2 c n^{1/2}}.\]
\end{lem}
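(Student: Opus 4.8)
The plan is to work conditionally on $\F_y^k$ and reduce the statement to a fact about one critical Galton--Watson process running along the geodesic from $y$ to $z$.

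\textbf{Reduction.} Recall that throughout this section we are on the event $\{Y^{(k-1)}_x=0\}$, so (as in the proof of Lemma~\ref{Mnlarge}) the $(y,k)$-freezing process is obtained by running a \emph{one-step} freezing process from the $(x,k-1)$-frozen particles, each of which sits $k-1$ steps away from $y$ in the wrong direction. Hence every particle counted in $Y^{(k)}_y$ reaches $y$ having taken exactly $k-1$ steps away from $y$, and therefore also exactly $k-1$ steps away from $z$ (as $z$ lies in the subtree rooted at $y$); it sits at $y$, at distance $L:=\lfloor an^{1/2}\rfloor$ from $z$, with a single further wrong step to spare. Denote these particles $p_1,\dots,p_m$, where $m=Y^{(k)}_y<an$ since $y\in M_n(x)$. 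By the branching property, conditionally on $\F_y^k$ the part of the $(z,k)$-freezing process that is relevant to $z$ is a fresh branching random walk started from $m$ particles at $y$, each frozen the next time it steps away from $z$ or the first time it reaches $z$; in particular $Y^{(k)}_z$ is the number of descendants of $p_1,\dots,p_m$ that reach $z$, and $F^{(k)}_z-F^{(k)}_y$ is the number of them that are frozen as they step away from $z$.

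\textbf{The good event.} For each $i$ let $(Z^{(i)}_j)_{j\ge0}$ count the descendants of $p_i$ that have taken only steps \emph{towards} $z$, so at time $j$ they sit at the $j$-th vertex of the geodesic; this is a critical Galton--Watson process with finite variance, the same for every $i$, stopped at generation $L$. Because $p_i$ has only one wrong step available, the \emph{only} way a descendant of $p_i$ can reach $z$ is inside $(Z^{(i)}_j)$, so on $E:=\bigcap_{i=1}^m\{Z^{(i)}_L=0\}$ we have $Y^{(k)}_z=0$. The subtrees of $p_1,\dots,p_m$ being independent given $\F_y^k$, and Kolmogorov's estimate (Lemma~\ref{kollem}) giving $\P(Z^{(i)}_L>0)\le C/L$ for $L$ large and a suitable constant $C$, we obtain $\P(E\mid\F_y^k)\ge(1-C/L)^m\ge e^{-2Cm/L}\ge e^{-2cn^{1/2}}$ for a suitable $c$ and all large $n$, using $m/L\le 2n^{1/2}$.

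\textbf{Controlling the frozen particles.} Write $N:=F^{(k)}_z-F^{(k)}_y=\sum_{i=1}^m N_i$, with $N_i$ the number of children of particles of $(Z^{(i)}_j)_{j<L}$ that step away from $z$. Given $(Z^{(i)}_j)$, each such particle has a $\mu$-distributed number of children, each stepping away from $z$ with probability $(d-1)/d$, so $\E[N_i\mid (Z^{(i)}_j)]=(d-1)\sum_{j<L}Z^{(i)}_j$; on $E_i:=\{Z^{(i)}_L=0\}$ this sum is the total progeny of $(Z^{(i)}_j)$, whence $\E[N_i\ind_{E_i}\mid\F_y^k]\le(d-1)\sum_{j=0}^{L-1}\E[Z^{(i)}_j]=(d-1)L$ by criticality. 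Dividing by $\P(E_i)\ge1-C/L$ and using that the subtree of $p_i$ is independent of $\bigcap_{i'\ne i}E_{i'}$ gives $\E[N_i\mid E,\F_y^k]\le(d-1)L(1+o(1))$, hence $\E[N\mid E,\F_y^k]\le m(d-1)L(1+o(1))<(d+1)a^2n^{3/2}$ for $n$ large (as $d-1<d+1$). Markov's inequality then gives $\P\big(N>2(d+1)a^2n^{3/2}\,\big|\,E,\F_y^k\big)<\tfrac12$, and combining with the previous paragraph,
\[\P\big(Y^{(k)}_z=0\text{ and }F^{(k)}_z-F^{(k)}_y\le2(d+1)a^2n^{3/2}\,\big|\,\F_y^k\big)\ge\tfrac12\,\P(E\mid\F_y^k)\ge\tfrac12 e^{-2cn^{1/2}}.\]

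\textbf{Main difficulty.} The key point is that the particles counted in $Y^{(k)}_y$ sit at $y$ with only a single wrong step in hand: this is what forces every path to $z$ through one critical Galton--Watson process (so the non-hitting probability is computable from Kolmogorov) and, at the same time, keeps the expected number of frozen particles linear in $L$ per initial particle (via $\E[Z_j]=1$). The one delicate step is checking that conditioning on the exponentially unlikely event $E$ does not inflate $\E[N]$; this works precisely because $E$ factorises over the independent subtrees of $p_1,\dots,p_m$ and each factor $\P(E_i)$ is close to $1$.
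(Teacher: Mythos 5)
Your proof is correct and reaches the same conclusion as the paper's, but by a genuinely different route, so let me compare. The paper reduces to a fresh $(z(y),1)$-freezing process started from $\lfloor an\rfloor$ particles at $y$, bounds $F_{z(y)}\le S_{z(y)}$, and then invokes the \emph{FKG inequality} to decouple the two decreasing events $\{Y_{z(y)}=0\}$ and $\{S_{z(y)}\le 2(d+1)a^2n^{3/2}\}$, treating each with Kolmogorov's estimate and Markov's inequality respectively. You instead write the non-hitting event as $E=\bigcap_i E_i$ with the $E_i$ independent across the subtrees of the $p_i$, and then observe that $\E[N_i\mid E]=\E[N_i\mid E_i]\le\E[N_i]/\P(E_i)$ by that same independence, which, since each $\P(E_i)$ is close to $1$, leaves the conditional mean essentially unchanged; you then apply Markov's inequality \emph{conditionally on} $E$. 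This avoids FKG entirely, works directly with $F$ rather than the looser $S$, and is arguably the more transparent route because it conditions on the ``right'' event. You also make explicit that the reduction to a one-step freezing process relies on being on the event $\{Y^{(k-1)}_x=0\}$ (so that each $p_i$ has exactly $k-1$ wrong steps and hence exactly one to spare); the paper's lemma statement and proof leave this implicit, though it is always satisfied where the lemma is applied in Proposition~\ref{nopartsprop}.

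One small technical slip that does not affect the conclusion: the identity $\E[N_i\mid(Z^{(i)}_j)_j]=(d-1)\sum_{j<L}Z^{(i)}_j$ is not correct for general offspring law $\mu$, since conditioning on the full sequence $(Z^{(i)}_j)_j$ includes $Z^{(i)}_{j+1}$, which carries information about the children of generation $j$ and hence biases the conditional count of wrong-step children (it is correct for Poisson offspring, where thinnings are independent). However, the bound you actually use, $\E[N_i\ind_{E_i}]\le\E[N_i]=(d-1)\sum_{j<L}\E[Z^{(i)}_j]=(d-1)L$, holds trivially since $N_i\ge0$, so the argument goes through.
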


\begin{proof}
Note that if we start with one particle at $0$, then in order for a particle to be $(z(y),k)$-frozen, it must also be $(y,k)$-frozen. In fact, to contribute to either $Y^{(k)}_{z(y)}$ or $F^{(k)}_{z(y)}-F^{(k)}_y$, a particle must be $(y,k)$-frozen at $y$ specifically. Thus, if $y\in M_n(x)$, then
\[\P\big(Y^{(k)}_{z(y)} = 0 \hbox{ and } F^{(k)}_{z(y)}-F^{(k)}_{y} \le 2(d+1) a^2 n^{3/2} \,\big| \F_y^k\big) \ge \P_{\lfloor a n\rfloor,y}(Y_{z(y)} = 0 \hbox{ and } F_{z(y)} \le 2(d+1) a^2 n^{3/2}).\]
Of course $F_{z(y)} \le S_{z(y)}$, so for $y\in M_n(x)$ we have
\[\P\big(Y^{(k)}_{z(y)} = 0 \hbox{ and } F^{(k)}_{z(y)}-F^{(k)}_{y} \le 2(d+1) a^2 n^{3/2} \,\big| \F_y^k\big) \ge \P_{\lfloor a n\rfloor,y}(Y_{z(y)} = 0 \hbox{ and } S_{z(y)} \le 2(d+1) a^2 n^{3/2}).\]
The events $\{Y_{z(y)} = 0\}$ and $\{S_{z(y)} \le 2(d+1) a^2 n^{3/2})\}$ are both decreasing (on the set of finite trees with the partial order $t\le t'$ if $t$ is a subtree of $t'$) so by the FKG inequality \cite{FKG},
\[\P_{\lfloor a n\rfloor,y}(Y_{z(y)} = 0 \hbox{ and } S_{z(y)} \le 2(d+1) a^2 n^{3/2}) \ge \P_{\lfloor a n\rfloor,y}(Y_{z(y)} = 0) \P_{\lfloor a n\rfloor,y}(S_{z(y)} \le 2(d+1) a^2 n^{3/2})\]
and thus, for $y\in M_n(x)$,
\begin{multline}\label{vxand}
\P\big(Y^{(k)}_{z(y)} = 0 \hbox{ and } F^{(k)}_{z(y)}-F^{(k)}_{y} \le 2(d+1) a^2 n^{3/2} \,\big|\, \F_y^k\big) \\
\ge \P_{\lfloor a n\rfloor,y}(Y_{z(y)} = 0) \P_{\lfloor a n\rfloor,y}(S_{z(y)} \le 2(d+1) a^2 n^{3/2}).
\end{multline}

Note that, starting with any number $j\in\N$ of particles at $y$,
\[\P_{j,y}(Y_{z(y)} = 0) = \P_{1,y}(Y_{z(y)}=0)^j.\]
Recall from the proof of Lemma \ref{existparts} that under $\P_{1,y}$, the event that $Y_{z(y)}$ equals zero is the event that a critical Galton-Watson tree (with finite variance) survives for fewer than $\lfloor a n^{1/2}\rfloor$ generations; this has probability at least $1-c/(a n^{1/2})$ for some finite constant $c$ by Lemma \ref{kollem}. Thus for large $n$
\[\P_{j,y}(Y_{z(y)} = 0) \ge \Big(1-\frac{c}{a n^{1/2}}\Big)^j \ge \exp\Big(-\frac{2cj}{a n^{1/2}}\Big)\]
where for the second inequality we used the fact that $1-u \ge e^{-2u}$ for $0\le u\le 1/2$.
Also
\[\P_{j,y}(S_{z(y)} \le 2(d+1) a^2 n^{3/2}) = 1-\P_{j,y}(S_{z(y)} > 2(d+1) a^2 n^{3/2}) \ge 1-\frac{\E_{j,y}[S_{z(y)}]}{2(d+1) a^2 n^{3/2}}\]
so applying Lemma \ref{nopartspt1},
\[\P_{j,y}(S_{z(y)} \le 2(d+1) a^2 n^{3/2}) \ge 1-\frac{(d+1)\cdot an^{1/2} \cdot j}{2(d+1) a^2 n^{3/2}} = 1 - \frac{j}{2an}.\]
Substituting these bounds back into \eqref{vxand}, we get that for $y\in M_n(x)$,
\[\P\big(Y^{(k)}_{z(y)} = 0 \hbox{ and } F^{(k)}_{z(y)}-F^{(k)}_{y} \le 2(d+1) a^2 n^{3/2} \,\big|\, \F_y^k\,\big) \ge \exp\Big(-\frac{2c an}{a n^{1/2}}\Big) \cdot \Big(1 - \frac{an}{2an}\Big) = \frac12 e^{-2 c n^{1/2}}\]
which completes the proof.
\end{proof}

Now we prove Proposition \ref{nopartsprop} by putting the estimates from Lemmas \ref{Mnlarge} and \ref{xprimeprob} together.

\begin{proof}[Proof of Proposition \ref{nopartsprop}]
Take $a\ge 5d$. For each $y\in T(x,a n^{1/2})$, arbitrarily choose a vertex $z(y)\in T(y,an^{1/2})$. Say that $y\in T(x,an^{1/2})$ is ``special'' if $Y^{(k)}_{z(y)} = 0$ and $F^{(k)}_{z(y)}-F^{(k)}_{y} \le 2(d+1) a^2 n^{3/2}$. Let 
\[\G_x^{k,r} = \sigma\bigg(\bigcup_{y\in T(x,r)} \F_y^k\bigg),\]
the $\sigma$-algebra generated by the $(y,k)$-freezing processes for all $y\in T(x,r)$. We note first that if $y$ is both good and special, then $Y^{(k)}_{z(y)} = 0$ and $F^{(k)}_{z(y)} \le (2d+5)a^2 n^{3/2}$. Thus, using also that $\F_x^{k-1} \subset \G_x^{k,r}$ for any $r\ge0$,
\begin{align*}
&\P\big(\exists z\in T(x,2an^{1/2}) : Y^{(k)}_z = 0 \text{ and } F^{(k)}_z \le (2d+5)a^2 n^{3/2} \,\big|\, \F_x^{k-1} \big)\\
&\hspace{20mm}\ge \P\Big( |M_n(x)| \ge \frac12 |T(x,an^{1/2})|,\,\exists y\in M_n(x) : y \text{ is special} \,\Big|\, \F_x^{k-1} \Big)\\
&\hspace{20mm}=\E\Big[ \ind_{\{|M_n(x)| \ge \frac12 |T(x,an^{1/2})|\}} \P\Big(\exists y\in M_n(x) : y \text{ is special} \,\Big|\, \G_x^{k,\lfloor an^{1/2}\rfloor} \Big) \,\Big|\, \F_x^{k-1}\Big].
\end{align*}
Given $\G_x^{k,\lfloor an^{1/2}\rfloor}$, the events $\big\{\{y \text{ is special}\} : y\in T(x,an^{1/2})\big\}$ are independent. Therefore the above is at least
\[\E\bigg[ \ind_{\{|M_n(x)| \ge \frac12 |T(x,an^{1/2})|\}}\bigg(1- \prod_{y\in M_n(x)}\P\big(y \text{ is not special} \,\big|\, \G_x^{k,\lfloor an^{1/2}\rfloor} \big)\bigg) \,\bigg|\, \F_x^{k-1}\bigg].\]
For any $y\in T(x,an^{1/2})$, the event that $y$ is special depends on $\G_x^{k,\lfloor an^{1/2}\rfloor}$ only through the value of $Y^{(k)}_y$; thus
\[\P\big(y \text{ is not special} \,\big|\, \G_x^{k,\lfloor an^{1/2}\rfloor} \big) = \P\big(y \text{ is not special} \,\big|\, \F_y^k \big).\]
By Lemma \ref{xprimeprob}, if additionally $y\in M_n(x)$, we have
\[\P\big(y \text{ is not special} \,\big|\, \F_y^k\big) \le 1-\frac{1}{2} \exp(-2c n^{1/2}) \le \exp(- e^{-2c n^{1/2}}/2),\]
and putting all this together we have shown that
\begin{align*}
&\P\big(\exists z\in T(x,2an^{1/2}) : Y^{(k)}_z = 0 \text{ and } F^{(k)}_z \le (2d+5)a^2 n^{3/2} \,\big|\, \F_x^{k-1} \big)\\
&\hspace{20mm}\ge \E\bigg[ \ind_{\{|M_n(x)| \ge \frac12 |T(x,an^{1/2})|\}}\bigg(1- \prod_{y\in M_n(x)}\exp(- e^{-2c n^{1/2}}/2)\bigg) \,\bigg|\, \F_x^{k-1}\bigg]\\
&\hspace{20mm}\ge \P\Big( |M_n(x)| \ge \frac12 |T(x,an^{1/2})| \,\Big|\, \F_x^{k-1}\Big) \Big(1- \exp(- |T(x,an^{1/2})|e^{-2c n^{1/2}}/4)\Big)
\end{align*}
By Lemma \ref{Mnlarge}, on the event $\{F^{(k-1)}_x \le n \text{ and } Y^{(k-1)}_x=0\}$ this is at least
\[\Big(1-\frac{4d}{a}\Big)\Big(1-\exp\big(- e^{-2c n^{1/2}}|T(x,an^{1/2})|/4\big)\Big).\]

Since $|T(y,a n^{1/2})| \ge (d-1)^{a n^{1/2}}$, we have $\exp\big(-e^{-2cn^{1/2}}|T(y,a n^{1/2})|/4\big) \le 1/a$ provided that $a$ and $n$ are large, so the above is at least $1-5d/a$. And of course if $Y^{(k)}_z=0$ and $F^{(k)}_z\le (2d+5)a^2 n^{3/2}$, then for any $j\ge0$, any vertex $v\in T(z,j) \subset T(x, 2an^{1/2}+j)$ also has $Y^{(k)}_v=0$ and $F^{(k)}_v\le (2d+5)a^2 n^{3/2}$. Thus
\[\P\big(\exists v\in T(x,(2d+5)a^2 n^{1/2}) : Y^{(k)}_v = 0 \text{ and } F^{(k)}_v \le (2d+5)a^2 n^{3/2} \,\big|\, \F_x^{k-1} \big) \ge 1-5d/a.\]
Writing $A=(2d+5)a^2$ completes the proof.
\end{proof}

\section{Proof of the lower bound in Theorem \ref{mainthm}}\label{lbsec}

We aim to prove that for any $\eta>0$,
\[\P\Big(\liminf_{n\to\infty} \frac{\tcov(n)-n}{\log\log n} < \frac{2}{\log(3/2)}-\eta\Big) = 0.\]
Take $\delta\in(0,1/4)$ small and $M$ large, both to be fixed later. 
For $k\ge1$, let
\[n_k = M^{(3/2+\delta)^k}, \,\,\,\, p_k = M^{-\delta(3/2+\delta)^{k-1}/2}, \,\,\,\,\hbox{ and }\,\,\,\, R_k = \sum_{j=0}^{k-1} M^{(1/2+\delta)(3/2+\delta)^j}.\]
Say that $x\in \partial B(R_{k})$ is \emph{slow} if $Y_x^{(k)}=0$ and $F_x^{(k)}\le n_k$. Define $\mathcal A_k$ to be the event that there is at least one slow vertex in $\partial B(R_k)$; that is,
\[\mathcal A_k = \{\exists z\in \partial B(R_k) : Y^{(k)}_z = 0 \hbox{ and } F^{(k)}_z \le n_k\}.\]
Let $X_k$ be a uniformly chosen slow vertex in $\partial B(R_k)$, or $X_k=0$ if there are no such vertices.

Note that when $M$ is large, $R_{k-1}\le n_{k-1}^{1/2}$ and $k\le n_k^{1/2}$. Thus, setting $A=n_{k-1}^\delta$ so that $n_k = An_{k-1}^{3/2}$, $R_k-R_{k-1}= An_{k-1}^{1/2}$ and $p_k=A^{-1/2}$, Proposition \ref{nopartsprop} tells us that for any slow $x\in \partial B(R_{k-1})$, we have
\[\P\big(\exists z\in T(x,R_{k}-R_{k-1}) : Y^{(k)}_z = 0 \,\hbox{ and }\, F^{(k)}_z \le n_{k} \,\big|\, \F^{k-1}_x \big) \ge 1- cp_k.\]
In particular,
\[\P(\mathcal A_k \cap \mathcal A_{k-1}) = \E[\P(\mathcal A_k \,|\, \F^{k-1}_{X_{k-1}}) \ind_{\mathcal A_{k-1}}] \ge (1-cp_k)\P(\mathcal A_{k-1})\]
and therefore
\[\P(\mathcal A_k^c \cap \mathcal A_{k-1}) \le cp_k\P(\mathcal A_{k-1})\le cp_k.\]

Since
\[\P\Big(\bigcup_{j=1}^k \mathcal A_j^c \Big) \le \P\Big(\bigcup_{j=1}^{k-1} \mathcal A_j^c \Big) + \P(\mathcal A_k^c, \mathcal A_{k-1}) \le \P\Big(\bigcup_{j=1}^{k-1} \mathcal A_j^c \Big) + cp_k,\]
by induction we have
\[\P\Big(\bigcup_{j=1}^k \mathcal A_j^c \Big) \le c\sum_{j=2}^k p_j + \P(\mathcal A_1^c) \le c\sum_{j=2}^\infty p_j + \P(\mathcal A_1^c).\]
Note that we can make $\P(\mathcal A_1^c)$ arbitrarily small by choosing $M$ sufficiently large, since for large enough $r$ \emph{any} vertex $z\in\partial B(r)$ satisfies $Y_z=0$ and $F_z\le r$ with probability at least $1-\eps$.

Choose $\eps>0$ and $\delta>0$ arbitrarily small, and $M$ large enough that $c\sum_{j=2}^\infty p_j + \P(\mathcal A_1^c) < \eps$. Then $\P(\bigcup_k \mathcal A_k^c)<\eps$. On the event $\mathcal A_k$, there is a vertex $z\in \partial B(R_k)$ such that no particles hit $z$ without first taking at least $k$ steps away from $z$. In this case the first hitting time of $z$ (and all its descendants in the tree) must be at least $R_k + 2k$. We deduce that
\[\P(\tcov(r) \ge r + 2k\,\,\forall r\ge R_k, \,\, \forall k\ge 1) > 1- \eps.\]
All that remains now is to invert $R_k$. Note that if $k\le \frac{\log\log n}{\log(3/2+3\delta)}$ then $\exp((3/2+3\delta)^k)\le n$, so if $n$ is large then $M^{(3/2+2\delta)^k}\le n$ and indeed $R_k \le n$. Therefore
\[\P\Big(\tcov(n) \ge n + \frac{2\log\log n}{\log(3/2+3\delta)} \,\, \hbox{ for all large } n\Big) > 1 - \eps,\]
and since $\delta$ and $\eps$ were arbitrary, this completes the proof of the lower bound in Theorem \ref{mainthm}.\qed

\section{Proof of the upper bound in Theorem \ref{mainthm}}\label{ubsec}

We want to show that for any $\eta>0$,
\[\P\Big(\limsup_{n\to\infty} \frac{\tcov(n)-n}{\log\log n} > \frac{2}{\log(3/2)}+\eta\Big) = 0.\]

Suppose that $x\in T$ and $y\in T(x,A)$, for some large $A\in\N$. Lemma \ref{existparts} tells us that if $\Gamma$ consists of at least $n$ particles none of which is in the subtree rooted at $x$ except possibly at $x$ itself, and none of which have distance greater than $2A$ from $y$, then
\[\P_\Gamma(F_y+Y_y \le \delta A n) \le e^{-\delta n/A}\]
where $\delta\in(0,1]$ is some fixed constant.

Now fix $N\in\N$. If we start with $N$ particles all at $0$, then (for any $k\ge 2$) none of the $(x,k-1)$-frozen particles are within the subtree rooted at $x$ except possibly at $x$ itself, and all have distance at most $d(0,y)+k$ from $y$. Thus if $d(0,y)+k\le 2A$, then recalling that $\F_x^{k-1}$ is the $\sigma$-algebra generated by the $(x,k-1)$-freezing process, on the event $\{F^{(k-1)}_x + Y^{(k-1)}_x \ge n\}$ we have
\[\P_{N,0}(F^{(k)}_y + Y^{(k)}_y \le \delta A n \,|\, \F^{k-1}_x ) \le e^{-\delta n/A}.\]

For each $k\in\N$ set
\[N_k = \frac{e^{(3/2)^k}}{\delta^2 a^2} \hspace{4mm} \hbox{ and } \hspace{4mm} R_k = \sum_{j=1}^{k-1} a N_j^{1/2}\]
for some small $a>0$ to be chosen later, and for $z\in T$ define the event
\[\mathcal B^k_z = \{F^{(k)}_z + Y^{(k)}_z \ge N_k\}.\]
Note that provided $a$ is sufficiently small, we have $R_k+k\le 2 aN_{k-1}^{1/2}$ for all $k\in\N$. Thus by the argument above, if $k\ge 2$, $x\in \partial B(R_{k-1})$ and $y\in T(x, a N_{k-1}^{1/2})$, 
 then on the event $\mathcal B^{k-1}_x$ we have
\[\P_{N,0}\big(F^{(k)}_y + Y^{(k)}_y \le \delta a N_{k-1}^{3/2} \,\big|\, \F^{k-1}_x \big) \le \exp\big(-\delta N_{k-1} / aN_{k-1}^{1/2}\big).\]
Since $\delta a N_{k-1}^{3/2} = N_k$, we deduce that
\[\P_{N,0}\big((\mathcal B^k_y)^c \cap \mathcal B^{k-1}_x \big) \le \exp\big(-\delta N_{k-1}^{1/2} / a\big).\]

Let $\mathcal B_k = \bigcap_{x\in \partial B(R_k)} \mathcal B^k_x$. There are $d(d-1)^{R_k-1}$ vertices in $\partial B(R_k)$, so a union bound gives
\[\P_{N,0}\big(\mathcal B_k^c \cap \mathcal B_{k-1} \big) \le d(d-1)^{R_k-1}\exp\big(-\delta N_{k-1}^{1/2} / a\big).\]
Since
\[\P_{N,0}\Big(\bigcup_{j=1}^k \mathcal B_j^c \Big) \le \P_{N,0}\Big(\bigcup_{j=1}^{k-1} \mathcal B_j^c \Big) + \P_{N,0}(\mathcal B_k^c, \mathcal B_{k-1}) ,\]
by induction we have
\begin{equation}\label{Zunionupperbd}
\P_{N,0}\Big(\bigcup_{j=1}^k \mathcal B_j^c \Big) \le \sum_{j=2}^k d(d-1)^{R_j-1}\exp\big(-\delta N_{j-1}^{1/2} / a\big) + \P_{N,0}(\mathcal B_1^c).
\end{equation}

Now fix $k\ge1$ and suppose that $\mathcal B_k$ occurs, so for each $z\in \partial B(R_k)$, there are at least $N_k$ particles that are $(z,k)$-frozen. All such particles have taken at most $k$ steps away from $z$ when they are frozen. Each of these particles has distance at most $R_k+k$ from $z$, and therefore the probability that it has a descendant that hits $z$ without taking any more steps away from $z$ is bounded from below by the probability that a critical Galton-Watson process with finite variance survives for $R_k+k \le 2R_k$ generations. This is at least $c/R_k$ for some constant $c$ by Lemma \ref{kollem}. Thus the probability that \emph{none} of the $(z,k)$-frozen particles has a descendant that hits $z$ without taking any more steps away from $z$ is at most $(1-c/R_k)^{N_k}\le \exp(-cN_k/R_k)$. Therefore if $H(z)$ is the first hitting time of $z$, we have
\[\P_{N,0}(H(z)> R_k + 2k \,|\, \mathcal B_k) \le \exp(-cN_k/R_k).\]
Now, if a particle hits $z$ without taking more than $k$ steps away from $z$, then for every $x$ on the path from $0$ to $z$, $x$ is hit by time $d(0,x)+2k$. Thus
\begin{align}
\P_{N,0}(\exists r\le R_k : \tcov(r) > r + 2k \,|\,\mathcal B_k) &= \P_{N,0}(\exists r\le R_k, \, x\in \partial B(r) : H(x) > r + 2k \,|\, \mathcal B_k)\nonumber\\
&\le \P_{N,0}(\exists z\in \partial B(R_k) : H(z) > R_k + 2k \,|\, \mathcal B_k)\nonumber\\
&\le d(d-1)^{R_k-1}\exp(-cN_k/R_k).\label{tcovupperbd}
\end{align}
Then
\begin{align*}
&\P_{N,0}\Big(\exists k : \max_{r\le R_k}(\tcov(r)-r) > 2k\Big)\\
&\hspace{30mm}\le \P_{N,0}\Big(\bigcup_{j=1}^\infty \mathcal B_j^c\Big) + \P_{N,0}\Big(\{\exists k,\, r\le R_k : \tcov(r) > r + 2k\}\cap \bigcap_{j=1}^\infty \mathcal B_j\Big)\\
&\hspace{30mm}\le \P_{N,0}\Big(\bigcup_{j=1}^\infty \mathcal B_j^c\Big) + \sum_{k=1}^\infty \P_{N,0}(\{\exists r\le R_k : \tcov(r) > r + 2k\}\cap \mathcal B_k)\\
&\hspace{30mm}\le \P_{N,0}\Big(\bigcup_{j=1}^\infty \mathcal B_j^c\Big) + \sum_{k=1}^\infty \P_{N,0}(\exists r\le R_k : \tcov(r) > r + 2k \,|\, \mathcal B_k).
\end{align*}
By \eqref{Zunionupperbd} and \eqref{tcovupperbd}, this is at most
\[\sum_{j=2}^\infty d(d-1)^{R_j-1}e^{-\delta N_{j-1}^{1/2} / a} + \P_{N,0}(\mathcal B_1^c) + \sum_{k=1}^\infty d(d-1)^{R_k-1}\exp(-cN_k/R_k).\]
Recalling that for $k\ge1$
\[N_k = \frac{e^{(3/2)^k}}{\delta^2 a^2} \hspace{4mm} \hbox{ and } \hspace{4mm} R_k = \sum_{j=1}^{k-1} a N_j^{1/2},\]
we note that for any $\eps>0$, by choosing $a$ sufficiently small we can ensure that 
\[\sum_{j=2}^\infty d(d-1)^{R_j-1}e^{-\delta N_{j-1}^{1/2} / a} + \sum_{k=1}^\infty d(d-1)^{R_k-1}\exp(-cN_k/R_k) < \eps\]
and thus
\[\P_{N,0}\Big(\exists k : \max_{r\le R_k}(\tcov(r)-r) > 2k\Big) \le \P_{N,0}(\mathcal B_1^c) + \eps.\]
Since $R_1=0$ and $N_1 = e^{3/2}/(\delta^2 a^2)$, we have $\mathcal B_1 = \{F^{(1)}_0 + Y^{(1)}_0 \ge e^{3/2}/(\delta^2 a^2)\}$. However, under $\P_{N,0}$, we have $F^{(1)}_0 = 0$ and $Y^{(1)}_0 = N$, so for $N\ge e^{3/2}/(\delta^2 a^2)$ we have $\P_{N,0}(\mathcal B_1)=1$ and therefore
\[\P_{N,0}\Big(\exists k : \max_{r\le R_k}(\tcov(r)-r) > 2k\Big) < \eps.\]
In our original model we started with $1$ particle (rather than $N$ particles) at the origin, but by waiting until the first time at which the number of particles at $0$ is at least $e^{3/2}/(\delta^2a^2)$, which is almost surely finite given that the process survives, we may choose $t$ large enough such that
\[\P_{1,0}\Big(\exists k : \max_{r\le R_k}(\tcov(r)-r) > 2k+t \,\Big|\, \text{survival}\Big) < 2\eps.\]
Since $\eps>0$ was arbitrary, applying this with $k = \frac{(1+\eta)\log\log n}{\log(3/2)}$ for arbitrarily small $\eta>0$ completes the proof of the upper bound in Theorem \ref{mainthm}.
\qed

\section*{Acknowledgements}
Many thanks go to Itai Benjamini, who asked me this question, and sent me a short proof of a weaker bound on $\tcov(r)$ that he had written jointly with Gady Kozma. My thanks also to Alice Callegaro and an anonymous referee, each of whom suggested several corrections and improvements to the paper.

\vspace{2mm}

\noindent
This work was supported by a Royal Society University Research Fellowship.

\bibliographystyle{plain}
\def\cprime{$'$}

\end{document}